\newcommand{\shrinkmargins}[1]{
  \addtolength{\textheight}{#1\topmargin}
  \addtolength{\textheight}{#1\topmargin}
  \addtolength{\textwidth}{#1\oddsidemargin}
  \addtolength{\textwidth}{#1\evensidemargin}
  \addtolength{\topmargin}{-#1\topmargin}
  \addtolength{\oddsidemargin}{-#1\oddsidemargin}
  \addtolength{\evensidemargin}{-#1\evensidemargin}
  }
\DeclareMathOperator{\Conf}{Conf}
\DeclareMathOperator{\PConf}{PConf}
\DeclareMathOperator{\Emb}{Emb}
\DeclareMathOperator{\Top}{Top}
\newcommand{\field}[1]{\mathbb{#1}}
\newcommand{\Q}{\field{Q}}
\newcommand{\ra}{\rightarrow}
\newcommand{\beq}{\begin{displaymath}}
\newcommand{\eeq}{\end{displaymath}}
\newcommand{\beqn}{\begin{equation}}
\newcommand{\eeqn}{\end{equation}}
\newcommand{\Fin}{\mathcal{F}}
\newcommand{\FI}{\mathcal{FI}}
\newcommand{\FIsharp}{\FI\#}
\newcommand{\ncFin}{\mathcal{N}}
\newcommand{\Vect}{\mathcal{V}}
\theoremstyle{plain}
\newtheorem{thm}{Theorem}
\newtheorem{prop}[thm]{Proposition}
\newtheorem{cor}[thm]{Corollary}
\newtheorem{lem}[thm]{Lemma}
\theoremstyle{definition}
\newtheorem{defn}[thm]{Definition}
\newtheorem{exmp}[thm]{Example}
\theoremstyle{remark}
\newtheorem{rem}[thm]{Remark}
\newtheorem{remark}[thm]{Remark}
\title[Algebraic structure on configuration spaces of manifolds with flows]{Algebraic structures on cohomology of configuration spaces of manifolds with flows}
\author{Jordan S. Ellenberg}
\address{Department of Mathematics, University of Wisconsin, 480 Lincoln Drive, Madison, WI 53706}
\email{ellenber@math.wisc.edu}
\urladdr{http://www.math.wisc.edu/~ellenber/}
\author{John D. Wiltshire-Gordon}
\address{Department of Mathematics, University of Wisconsin, 480 Lincoln Drive, Madison, WI 53706}
\email{jwiltshiregordon@gmail.com}
\urladdr{https://sites.google.com/wisc.edu/jwg/home}
\begin{document}

\begin{abstract}
Let $\PConf^n M$ be the configuration space of ordered $n$-tuples of distinct points on a smooth manifold $M$ admitting a nowhere-vanishing vector field.  We show that the $i^{th}$ cohomology group with coefficients in a field $H^i(\PConf^n M, k)$ is an $\ncFin$-module, where $\ncFin$ is the category of noncommutative finite sets introduced by Pirashvili and Richter \cite{HochFunctorHomology}.  Studying the representation theory of $\ncFin$, we obtain new polynomiality results for the cohomology groups $H^i(\PConf^n M, k)$.  In the case of unordered configuration space $\Conf^n M = (\PConf^n M)/S_n$ and rational coefficients, we show that cohomology dimension is nondecreasing: $\dim H^i(\Conf^n M, \mathbb{Q}) \leq \dim H^i(\Conf^{n+1} M, \mathbb{Q})$.
\end{abstract}

\maketitle

If $M$ is a smooth manifold, we denote by $\Conf^n M$ the {\em configuration space} parametrizing unordered $n$-tuples $(p_1, \ldots, p_n)$ of distinct points in $M$.  By $\PConf^n M$, the {\em pure} or {\em ordered} configuration space, we mean the space parametrizing ordered $n$-tuples of distinct points; this space thus carries a free action of $S_n$, the quotient by which is $\Conf^n M$.

There is a large existing literature concerning homological stability for configuration spaces.  A foundational result of Arnol'd gives an explicit description of the cohomology of $\PConf^n \mathbb{R}^2$ \cite{ArnoldClassical}.  Cohen provides a similar result for $\PConf^n \mathbb{R}^m$ \cite{CohenClassical}.  The case of open manifolds $M$ goes back to work of McDuff and Segal \cite{McDuffClassical} \cite{SegalClassical}, who study the unordered case.  Later work of Church, Farb, Nagpal, and the first author~\cite{CEFN} provides a detailed account of symmetric group characters in cohomology in the ordered case.  They show that these characters are polynomial in a certain sense.  Their work relies on an $\FIsharp$-structure on $\PConf^n M$ that introduces points ``from infinity.''  

For closed manifolds, there may be topological obstructions to introducing points.  Nevertheless, in \cite{Church}, Church proves that characters appearing in the cohomology of configuration spaces with rational coefficients are \textit{eventually polynomial} using ``representation stability'' phenomena in the spirit of \cite{ChurchFarbRTHS}.  A subsequent proof of eventual polynomiality using the $\FI^{op}$-structure on $\PConf^n M$ appears in \cite{CEF}.  More traditional proofs have now appeared: see \cite{RWStability} and \cite{knudsen}.  

Another way to add points to a configuration on a closed manifold appears in recent work of Cantero-Palmer \cite{canteropalmer} and Berrick, Cohen, Wong, and Wu \cite{BCWW}; the idea is to introduce new points infinitesimally near old points.  These constructions rely on the existence of a nowhere-vanishing vector field on $M$.

The main contribution of the present paper is to use this geometric information associated with $M$ -- namely, the existence of a nowhere-vanishing vector field, or multiple such fields -- to improve \textit{eventual} polynomiality to \textit{immediate} polynomiality for some closed manifolds, via consideration of the natural module structures enjoyed by the cohomology of the configuration spaces of such a manifold.

We recall the  \textbf{category of non-commutative finite sets} $\ncFin$ introduced by Pirashvili and Richter in \cite{HochFunctorHomology}.  The objects of $\ncFin$ are finite sets;  the morphisms are finite set maps together with a linear ordering on each fiber.  Given two such morphisms,
\beq
\left(f : X \longrightarrow Y, \;\;\; \left( \leq_{f^{-1}(y_1)},\;\; \leq_{f^{-1}(y_2)},\;\; \ldots, \;\;\leq_{f^{-1}(y_n)} \right) \right) \phantom{,}
\eeq
\beq
\left(g : Y \longrightarrow Z, \;\;\; \left( \leq_{f^{-1}(z_1)},\;\; \leq_{f^{-1}(z_2)},\;\; \ldots, \;\;\leq_{f^{-1}(z_m)} \right) \right),
\eeq
their composite consists of the composite of the underlying finite set maps, together with induced total orders
\beq
\left(g \circ f : X \longrightarrow Z, \;\;\; \left( \leq_{(g \circ f)^{-1}(z_1)},\;\; \leq_{(g \circ f)^{-1}(z_2)},\;\; \ldots, \;\;\leq_{(g \circ f)^{-1}(z_m)} \right) \right),
\eeq
where, given $x, x' \in (g \circ f)^{-1}(z_p)$, we set $x \leq_{(g \circ f)^{-1}(z_p)} x'$ if $f(x) <_{(g)^{-1}(z_p)} f(x')$, or, when $f(x)=f(x')$, if $x \leq_{f^{-1}(f(x))} x'$.

If $\mathcal{C}$ is a category, a $\mathcal{C}$-module (or $\mathcal{C}$-representation) is a functor from $\mathcal{C}$ to $k$-vector spaces for some field $k$; morphisms of $\mathcal{C}$-modules are natural transformations.  We say a $\mathcal{C}$-module $V$ is {\em finitely generated} if there is a finite list of vectors $v_i \in Vc_i$ so that no proper $\mathcal{C}$-submodule of $V$ contains all the $v_i$.


We make certain simplifying assumptions about the manifold $M$:
\begin{equation}
\mbox{$M$ is smooth, orientable, connected, finite type, and $\mathrm{dim} M \geq 2$.} \label{eq:assume} \tag{$\ast$}
\end{equation}
\begin{thm} \label{thm:main} Let $M$ be a manifold satisfying \eqref{eq:assume} that admits a nowhere-vanishing vector field, and let $k$ be a coefficient field.  Then, for every $i$, the assignment $\{1, 2, \ldots, n \} \mapsto H^i(\PConf^n M, k)$ extends to a finitely generated $\ncFin$-module over the field $k$. 
\label{th:main}
\end{thm}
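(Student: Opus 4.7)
My plan is to construct the $\ncFin$-action on cohomology geometrically by using the flow of the given nowhere-vanishing vector field to duplicate configuration points along their orbits, then deduce finite generation by comparing with the standard $\FI$-structure.

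\textbf{Step 1 (Geometric construction).} Let $\phi: \R \times M \to M$ denote the flow of a fixed nowhere-vanishing vector field. For a morphism $f: X \to Y$ in $\ncFin$, I construct a continuous map $\Phi_f: \PConf^Y M \to \PConf^X M$, well-defined as a homotopy class; pullback then supplies the linear map $\Phi_f^*: H^i(\PConf^X M, k) \to H^i(\PConf^Y M, k)$. Given $(p_y)_{y \in Y}$ and $\epsilon > 0$, send each $x \in X$ to $\phi_{j(x)\epsilon}(p_{f(x)})$, where $j(x)$ is the rank of $x$ in the ordered fiber $f^{-1}(f(x))$. Because the vector field is nowhere vanishing, for $\epsilon$ below a threshold $\epsilon_0(p_\bullet)$ varying lower-semicontinuously with the configuration, the $|X|$ resulting points are mutually distinct. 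Working on the open subspace
\beq
U = \{(p_\bullet, \epsilon) : 0 < \epsilon < \epsilon_0(p_\bullet)\} \subset \PConf^Y M \times (0, \infty),
\eeq
the projection $U \to \PConf^Y M$ has contractible fibers (initial open intervals) and so is a weak equivalence, while the flow-duplication gives a continuous map $U \to \PConf^X M$. These fit into a zigzag $\PConf^Y M \xleftarrow{\sim} U \to \PConf^X M$, well-defining $\Phi_f^*$ on cohomology.

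\textbf{Step 2 (Functoriality).} For composable $X \xrightarrow{f} Y \xrightarrow{g} Z$, I must check $\Phi_f \circ \Phi_g \simeq \Phi_{g \circ f}$. Applying $\Phi_g$ at scale $\epsilon$ followed by $\Phi_f$ at scale $\delta \ll \epsilon$ places the point indexed by $x \in X$ at $\phi_{j\epsilon + k\delta}(p_z)$, where $z = g(f(x))$, $j$ is the rank of $f(x)$ in $g^{-1}(z)$, and $k$ is the rank of $x$ in $f^{-1}(f(x))$. Sorting these points by flow-time along the orbit of $p_z$ produces the lexicographic ordering on $(g \circ f)^{-1}(z)$, precisely the composition rule for fiber orderings in $\ncFin$. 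The direct $\Phi_{g \circ f}$ at scale $\eta$ places $x$ at $\phi_{r(x)\eta}(p_z)$, with $r(x)$ the rank of $x$ in $(g \circ f)^{-1}(z)$ under that same lexicographic order. Both schemes therefore order points identically along each flow orbit, so the straight-line homotopy of flow-times between them remains within the locus of admissible configurations and provides the required homotopy. Identity compatibility is immediate.

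\textbf{Step 3 (Finite generation).} Injections in $\ncFin$ inherit unique orderings on their (singleton and empty) fibers, yielding a canonical inclusion $\FI \hookrightarrow \ncFin$. Under restriction along this inclusion, the $\ncFin$-module $n \mapsto H^i(\PConf^n M, k)$ becomes the standard $\FI$-module structure arising from the forgetful maps $\PConf^n M \to \PConf^k M$. By results of \cite{CEF} and \cite{CEFN}, this $\FI$-module is finitely generated for $M$ as in the theorem; any finite $\FI$-generating set is a fortiori a finite $\ncFin$-generating set.

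I expect the main technical obstacle to lie in Step 2: reconciling the iterated and direct flow-perturbation schemes up to explicit homotopy requires careful tracking of scale parameters, though the key observation -- that both schemes induce the same ordering of points along each flow orbit -- makes the comparison tractable. Step 1's $\epsilon$-dependence becomes clean via the zigzag formulation, and Step 3 reduces to applying prior $\FI$-module results once the new structure is in place.
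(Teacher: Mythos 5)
Your proposal is correct and implements the same underlying geometric idea as the paper: use the vector field to introduce new points infinitesimally near old ones, ordered along the flow direction, thereby defining a contravariant $\ncFin$-action on configuration spaces up to homotopy; then deduce finite generation by restricting to the subcategory $\FI \hookrightarrow \ncFin$ and invoking the known finite generation of the $\FI$-module structure from \cite{CEF}. The difference is in packaging. You construct the maps directly by flowing points for a small time $\epsilon$, which forces you to manage a configuration-dependent threshold $\epsilon_0(p_\bullet)$ via a zigzag through the auxiliary space $U$, and then in Step 2 to compare two different scale-tracking schemes by an explicit homotopy of flow-times — which, as you note, also requires verifying that distinct orbits do not collide during the homotopy. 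The paper sidesteps all of this by replacing $\PConf^n M$ with the homotopy-equivalent space $\Emb(B^1_{\sqcup n}, M)$ of exp-embeddings of disjoint intervals, so that morphisms of the category $\mathcal{F}E_1$ act literally by precomposition with exp-embeddings between disjoint unions of intervals; disjointness is built in, no $\epsilon$ appears, and functoriality is tautological because composition is just composition of embeddings. The identification of $\pi_0\mathcal{F}E_1$ with $\ncFin$ then does the work of your Step 2 in one stroke (a configuration of intervals in an interval, up to homotopy, is an ordering), and the same framework handles $l \geq 2$ independent vector fields to give a full $\Fin$-action (Theorem~\ref{thm:refinedmain}). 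So your route works but carries more bookkeeping; the paper's operadic model buys you cleaner functoriality and a uniform treatment across $l$.
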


\begin{remark}  
Under the assumptions \eqref{eq:assume}, a nowhere-vanishing vector field 
exists if and only if the Euler characteristic vanishes (see, for example, \cite[p.552 Satz III]{vectorfields}).  Odd-dimensional manifolds have vanishing Euler characteristic by Poincar\'e duality.  Thus, still assuming \eqref{eq:assume}, any closed, 
odd-dimensional manifold with $\dim M \geq 3$ satisfies the conditions of Theorem~\ref{th:main}.   \end{remark}

\begin{remark}  We have a forgetful functor $\phi : \ncFin \longrightarrow \Fin$ to the more familiar category of finite sets that sends a map to its underlying set map.  We will see in Theorem \ref{thm:refinedmain} that when $M$ is a manifold admitting two vector fields which are everywhere linearly independent, the $\ncFin$-module structure on the cohomology of the pure configuration spaces of $M$ actually descends to an $\Fin$-module structure.
\end{remark}
\begin{remark}
We have another useful functor $\psi : \Delta \longrightarrow \ncFin$ from the category of nonempty finite linear orders and weakly monotone functions.  Indeed, any fiber of a monotone function inherits a natural ordering from the domain, and these orderings are compatible.  It follows that any $\ncFin$-representation can be restricted to a $\Delta$-representation ($\Delta$-representations are more commonly known as cosimplicial vector spaces).
\end{remark}

\begin{cor} \label{cor:dims} If $M$ satisfies \eqref{eq:assume} and admits a nowhere-vanishing vector field, then, for each $i$, there exists a polynomial $P_i$ such that
\begin{equation}
\dim H^i(\PConf^n M, k) = P_i(n)
\label{eq:agreement}
\end{equation}
for all $n > 0$.  The degree of $P_i$ is at most $i$ if $\dim M \geq 3$, and at most $2i$ if $\dim M =2$.
\label{co:polydim}
\end{cor}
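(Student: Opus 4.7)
The plan is to combine Theorem~\ref{thm:main} with two ingredients: a general polynomial-growth principle for finitely generated $\ncFin$-modules, and an explicit bound on the generation degree of $H^i(\PConf^n M, k)$ as an $\ncFin$-module.

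For the first ingredient, I would begin by computing the dimension of the representable functor. A morphism $[k] \to [n]$ in $\ncFin$ is a set map plus an ordering on each of the $n$ (possibly empty) fibers, equivalent to a permutation of $[k]$ cut into $n$ consecutive ordered blocks, giving
\[
\dim \Hom_{\ncFin}([k], [n]) = k!\binom{n+k-1}{k},
\]
a polynomial in $n$ of degree exactly $k$. Any finitely generated $\ncFin$-module $V$ with generators in degrees $\leq d$ is a quotient of a finite direct sum of such representables with each $k \leq d$, so $\dim V([n])$ is bounded above by a polynomial of degree $d$. To upgrade this to an exact polynomial identity for all $n > 0$, I would establish Noetherianity of finitely generated $\ncFin$-modules (possibly via the comparison $\psi : \Delta \to \ncFin$ noted in the final remark) and iteratively construct a finite resolution by representables with appropriate control on the degrees of generators at each stage; the Euler characteristic then produces a polynomial of degree $\leq d$ equal to $\dim V([n])$.

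For the second ingredient the input is the classical description of $H^*(\PConf^n \R^m)$ by Arnol'd (for $m=2$) and F.~Cohen (for $m \geq 3$): the ring is generated by pairwise classes of degree $m-1$ attached to each ordered pair of points. A class in $H^i$ is therefore a product of at most $i/(m-1)$ such generators and is supported on at most $2i/(m-1)$ of the $n$ points. A class supported on $\ell$ points is the pullback from $H^i(\PConf^\ell \R^m)$ along a forgetful map, corresponding under the $\ncFin$-structure to an injection $[\ell] \hookrightarrow [n]$. Hence for $\R^m$ the generation degree of $H^i(\PConf^n \R^m, k)$ as an $\ncFin$-module is at most $2i/(m-1)$, which is at most $i$ for $m \geq 3$ and at most $2i$ for $m = 2$.

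The main obstacle I anticipate is transferring this generation-degree bound from $\R^m$ to an arbitrary closed $M$ admitting a nowhere-vanishing vector field, for which there are additional cohomology classes coming from $H^*(M)$ that do not appear in the Arnol'd/Cohen picture. I would control these classes via a Leray-type spectral sequence for the inclusion $\PConf^n M \hookrightarrow M^n$, in the spirit of Totaro and Knudsen, tracking an $\ncFin$-action on each page: the base contributions $H^*(M^n)$ can be generated from few points using the ``insert a nearby point'' morphisms provided by the vector field, while the fibrewise contributions are handled by the $\R^m$ case above. A careful analysis of convergence should then yield the desired generation bound for $H^i(\PConf^n M, k)$, completing the proof when combined with Step~1.
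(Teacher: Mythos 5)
Your overall setup is right: use Theorem~\ref{thm:main} together with polynomial growth of the representable $\ncFin$-modules, and your computation $\dim\Hom_{\ncFin}([k],[n])=k!\binom{n+k-1}{k}$ is correct. But the step that gets from ``bounded above by a polynomial of degree $d$'' to ``exactly equal to a polynomial for all $n>0$'' is where your argument has a genuine gap. You propose Noetherianity plus a \emph{finite} resolution by representables (with controlled generation degrees) and an Euler characteristic; but Noetherianity does not produce a finite resolution---that requires finite projective dimension, which you have not established and which is not obvious for $\ncFin$---and even if you had a finite resolution, controlling generation degrees at each syzygy stage is an additional claim you have not justified. The paper bypasses all of this via the Dold--Kan correspondence: restricting a finitely generated $\ncFin$-module along $\psi:\Delta\to\ncFin$ gives a cosimplicial vector space, and Lemma~\ref{lem:dkrl} shows that for $\Delta$-modules, polynomially bounded dimension forces the associated cochain complex to have \emph{finite support}, hence finite length, hence $\dim V[n]$ is a finite sum of $\binom{n-1}{p}$'s and so exactly a polynomial for $n\ge 1$. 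This is the key lemma your proposal is missing; without it or a substitute, exact polynomiality does not follow from bounded growth.

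For the degree bound, your route is substantially more elaborate than the paper's and also incomplete. You aim to bound the generation degree of $H^i(\PConf^n M,k)$ as an $\ncFin$-module, starting from the Arnol'd/Cohen description for $\R^m$ and then transferring to a closed $M$ via a Leray/Totaro-type spectral sequence with an $\ncFin$-action tracked on every page. That transfer is exactly where the difficulty lies, and ``a careful analysis of convergence should then yield the desired generation bound'' leaves it unresolved. The paper avoids generation-degree bookkeeping entirely: it quotes the dimension upper bounds $\dim H^i(\PConf^n M,k)\le$ (polynomial of degree $i$, resp.\ $2i$) from \cite[Th 6.3.1, Rem 6.3.3]{CEF} (noting that the upper bound there does not actually need the characteristic-zero hypothesis), and since the dimension sequence is already known to be \emph{exactly} polynomial, an upper bound of degree $d$ forces the polynomial to have degree $\le d$. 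If you want to keep your strategy, you would need to (i) replace the unproven finite-resolution claim with the Dold--Kan argument or an explicit proof of finite length, and (ii) either carry out the spectral-sequence generation-degree bound for closed $M$ in detail, or simply cite the dimension bound from \cite{CEF} as the paper does.
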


\begin{remark}  That \eqref{eq:agreement} holds for all $n$ sufficiently large relative to $i$ is proved in \cite[Theorem 1.10]{CEFN}.  When $M$ is an open manifold, it is shown in \cite{CEF} that $H^i(\PConf^n M, k)$ carries a different structure, that of an {\em $\FIsharp$-module}; it follows that the identity \eqref{eq:agreement} holds for {\em all} non-negative $n$, zero included.  So the present result is of interest primarily for closed manifolds $M$.  In this case, \eqref{eq:agreement} need not hold for $n=0$; for instance, when $M = S^3$, we have that
\beq
\dim H^2(\PConf^n S^3, \Q) = (1/2)(n-1)(n-2)
\eeq
for all $n > 0$, but obviously not for $n=0$.  (See e.g. \cite[Theorem 3]{CT}.)  To see that some hypothesis on $M$ is necessary, note that
\beq
\dim H^1(\PConf^n S^2, \Q) = (1/2)(n^2 - 3n)
\eeq
for all $n \geq 3$, but this is evidently not the case for $n=1,2$.  
\label{rem:sharpness}
\end{remark}

In fact, the conclusion of Corollary~\ref{co:polydim} does not require the full $\ncFin$-module structure on the cohomology of configuration space; it uses only the fact that the sequence of spaces $H^i(\PConf^n M)$ forms a cosimplicial vector space, as follows immediately from the constructions of \cite{BCWW}, as we will explain in Remark \ref{rem:cosimplicialdimensions}.  However, the extra structure on cohomology allows us to control not only the dimension of the cohomology group, but its structure as a representation of $S_n$.  By a {\em character polynomial}, following the notation of \cite{CEF}, we mean a polynomial in the formal variables $X_1, X_2, \ldots$; a character polynomial can be interpreted as a character of $S_n$ for every $n$ by taking $X_j(\sigma)$ to be the number of $j$-cycles in the cycle decomposition of $\sigma$.  (In particular, $X_j$ is identically $0$ on $S_n$ when $n < j$.)  The variable $X_j$ is considered to have degree $j$.

\begin{cor}  \label{cor:characters} Let $M$ be a manifold satisfying \eqref{eq:assume} that admits a nowhere-vanishing vector field.  Then, for each $i$, there exists a character polynomial $P_i$ such that the character of $S_n$ acting on $H^i(\PConf^n M, \Q)$ agrees with $P_i$ for all $n > 0$.  The degree of $P_i$ is at most $i$ if $\dim M \geq 3$, and at most $2i$ if $\dim M =2$.
\label{co:polychar}
\end{cor}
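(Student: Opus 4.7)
The plan is to deduce Corollary~\ref{cor:characters} from Theorem~\ref{thm:main} by computing the characters of the representable $\ncFin$-modules and then controlling the weight of a generating set. The automorphism group of the object $[n]$ in $\ncFin$ is $S_n$, since the orderings on singleton fibers of a bijection are trivial; hence evaluating any $\ncFin$-module on $[n]$ produces an $S_n$-representation, and Theorem~\ref{thm:main} provides a finitely generated $\ncFin$-module $V$ with $V_n = H^i(\PConf^n M, \Q)$.

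The central computation is the character of the representable module $P_T = \Q[\Hom_{\ncFin}(T,-)]$ for a finite set $T$ with $t$ elements. An element of $P_T([n])$ is a function $f : T \to [n]$ together with a total order on each fiber; a permutation $\sigma \in S_n$ acts by post-composition, and a fixed point requires $\sigma(f(x)) = f(x)$ for every $x \in T$, so that $f$ lands in the $X_1(\sigma)$ fixed points of $\sigma$, while the orderings on fibers are then automatically preserved. Counting $\ncFin$-morphisms from $T$ into a set of size $X_1$ gives the rising factorial $X_1(X_1+1)(X_1+2)\cdots(X_1+t-1)$, a character polynomial of degree $t$ in the single cycle variable $X_1$.

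To extend this from representables to arbitrary finitely generated $\ncFin$-modules, I would develop a structure theory for $\ncFin$-modules in the spirit of the $\FIsharp$-decomposition exploited in \cite{CEF}; I expect this to be the main technical hurdle. The most robust route is to establish Noetherianity of the category of $\ncFin$-modules, which produces a finite resolution of $V$ by representables and reduces the character of $V_n$ to an alternating sum of rising factorials in $X_1$, hence a character polynomial. A more direct route, if available, is a decomposition theorem expressing any finitely generated $\ncFin$-module as a sum of modules of the form $P_T \tensor_{S_T} W$ for $S_T$-representations $W$; this would express the character of $V_n$ as a nonnegative integer combination of characters, still polynomial in the $X_j$, with degree bounded by the maximum $|T|$ appearing.

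Finally, to bound the degree of $P_i$, I would exploit that the faithful inclusion $\FI \hookrightarrow \ncFin$ (an injection has singleton fibers carrying the unique trivial ordering) implies that the $\ncFin$-weight of $V$ is bounded by its $\FI$-weight, since any $\FI$-generating set generates $V$ as an $\ncFin$-module. A standard weight bound via Totaro's spectral sequence, as in \cite{CEFN}, gives that $H^i(\PConf^n M, \Q)$ is generated as an $\FI$-module in degree at most $i$ when $\dim M \geq 3$ and at most $2i$ when $\dim M = 2$. Combining this bound on generator weight with the character computation for $P_T$ (whose character polynomial has degree $|T|$) yields the claimed bound on $\deg P_i$, completing the proof.
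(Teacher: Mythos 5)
Your computation of the character of the bare representable $P_T = \Q\left[\Hom_{\ncFin}(T,-)\right]$ is correct, and it is exactly this computation that shows the first route cannot work: you find that $\chi_{P_T}$ is a polynomial in $X_1$ alone. If $V$ admitted a finite resolution by direct sums of such representables, then $\chi_{V[n]}$ would be an alternating sum of rising factorials in $X_1$, hence again a polynomial in $X_1$ alone. But the characters in question genuinely involve the other $X_j$: already for $M = \R^2$ (open, with the constant vector field, so covered by Theorem~\ref{thm:main}), $H^1(\PConf^n \R^2,\Q)$ is the permutation representation on $2$-element subsets of $[n]$, with character $\binom{X_1}{2} + X_2$. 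So no finite resolution by bare representables can exist. Moreover, Noetherianity does not produce finite projective resolutions; it only lets you truncate a possibly infinite one while remaining finitely generated. Finite global dimension is a much stronger statement, and it fails here: the surjection from $\Q\left[\Hom_{\ncFin}(\{*\},-)\right]$ onto the constant module $\underline{\Q}$ is not split (naturality for the inclusions $[1] \hookrightarrow [n]$ forces the putative splitting vector in $\Q^n$ to equal every standard basis vector simultaneously), so $\ncFin$-mod is not semisimple and the ``more direct'' decomposition into summands $P_T \otimes_{S_T} W$ is not available either.

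The paper bypasses resolutions entirely. It proves that finitely generated $\ncFin$-modules have finite length (Theorem~\ref{thm:ncfgfl}, by restricting along $\Delta \hookrightarrow \ncFin$ and invoking Dold--Kan via Lemma~\ref{lem:dkrl}), then shows that every simple $\ncFin$-module factors through the quotient functor $\phi:\ncFin \to \Fin$ (Theorem~\ref{thm:descend}), and finally cites Rains \cite{rains} and Putcha \cite{fulltransformationmonoid} for polynomiality of characters of simple $\Fin$-representations (Corollary~\ref{cor:charpoly}). The degree bound then comes not from an $\FI$-weight comparison but from the dimension bound $\dim H^i(\PConf^n M) = O(n^i)$ (resp.\ $O(n^{2i})$) of \cite{CEF}, together with the explicit description of the $\Fin$-simples $C_k$, for which dimension growth of degree $k$ forces character-polynomial degree $k$. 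So the ingredient you are missing is the structure theory via finite length and the descent of simples to $\Fin$; without it, the passage from representables to an arbitrary finitely generated module is unsupported and, as the $\R^2$ example shows, would give the wrong answer if taken at face value.
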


We note that Corollary~\ref{co:polydim} (for rational coefficients) follows immediately from Corollary~\ref{co:polychar} by evaluating $P_i$ at $(X_1, X_2,\ldots) = (n,0,\ldots).$

As in Remark \ref{rem:sharpness}, the novelty of Corollary~\ref{co:polydim} lies in the uniformity of the bound for closed manifolds $M$; we already have from \cite[Theorem 1.8]{CEF} that the character of the representation $H^i(\PConf^n M, \Q)$ is given by a character polynomial for $n$ large enough relative to $i$, and for all $n$ when $M$ is open.

The following two corollaries concern rational cohomology of unordered configuration spaces on manifolds with a nowhere-vanishing vector field.  The first is a nondecreasing statement for Betti numbers as the number of points grows:
\begin{cor}
 Assume \eqref{eq:assume}, and suppose $M$ admits a nowhere-vanishing vector field.  Then, unordered configuration space $\Conf^n M$ satisfies
$$
\dim H^i(\Conf^n M, \Q) \leq \dim H^i(\Conf^{n+1} M, \Q).
$$
\label{co:monotone}
\end{cor}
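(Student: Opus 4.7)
The plan is to apply Theorem~\ref{thm:main} to obtain the $\ncFin$-module structure on $V^n := H^i(\PConf^n M, \Q)$ and exploit it. Since $S_n$ acts freely on $\PConf^n M$ with quotient $\Conf^n M$, passing to $\Q$-coefficients gives $H^i(\Conf^n M, \Q) = V^{n, S_n}$, so the task reduces to constructing an injection $V^{n, S_n} \hookrightarrow V^{n+1, S_{n+1}}$ for each $n$. The case $i = 0$ is clear (both sides equal $\Q$ for connected $M$), so I focus on $i \geq 1$; then $V^0 = H^i(\mathrm{pt}, \Q) = 0$, which will be crucial.

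Using the functor $\psi : \Delta \to \ncFin$ from the remark after Theorem~\ref{thm:main}, let $d^j : [n] \to [n+1]$ and $s^k : [n] \to [n-1]$ denote the coface and codegeneracy $\ncFin$-morphisms, and write $d^j, s^k$ also for the induced linear maps $V^n \to V^{n+1}$ and $V^n \to V^{n-1}$. I would define the symmetrized operators
\[
T_n(v) := \sum_{j=1}^{n+1} d^j(v), \qquad R_n(v) := \sum_{k=1}^{n-1} s^k(v).
\]
For $\sigma \in S_{n+1}$, the $\ncFin$-factorization $\sigma \circ d^j = d^{\sigma(j)} \circ \tau_{\sigma,j}$ with $\tau_{\sigma,j} \in S_n$ gives $\sigma \cdot d^j(v) = d^{\sigma(j)}(v)$ on $V^{n, S_n}$; summing over $j$ shows $T_n$ descends to a map $V^{n, S_n} \to V^{n+1, S_{n+1}}$. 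A symmetric argument produces $R_n : V^{n, S_n} \to V^{n-1, S_{n-1}}$.

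Injectivity of $T_n$ should then follow from the key relation
\[
R_{n+1} \circ T_n(v) = 2n \cdot v + T_{n-1} \circ R_n(v) \qquad (v \in V^{n, S_n}),
\]
obtained by applying the cosimplicial identities ($s^k d^j = \id$ for $j \in \{k, k+1\}$, and otherwise $s^k d^j = d^? s^?$) together with $S_n$-invariance of $v$. If $T_n(v) = 0$ the relation places $v$ in the image of $T_{n-1}$ and $R_n(v)$ in the image of $T_{n-2}$; iterating, each $R_{k+1} \cdots R_n(v)$ lies in the image of $T_{k-1}$. Eventually $R_2 \cdots R_n(v)$ must lie in the image of $T_0$, but $T_0 = 0$ since $V^0 = 0$, so unwinding the chain of equations forces $v = 0$.

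The main obstacle is verifying the key relation rigorously: this is a delicate combinatorial computation in $\ncFin$, requiring both the cosimplicial identities and the $\ncFin$-morphism factorizations that tie the $S_n$-action to coface/codegeneracy compositions. A cleaner alternative finish, should the induction prove awkward, is to reduce to the case of a free $\ncFin$-module $P_S$, where $\dim (P_S)^{n, S_n}$ counts set-partitions of $S$ into at most $n$ totally ordered nonempty blocks and is manifestly nondecreasing in $n$.
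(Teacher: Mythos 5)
Your approach is genuinely different from the paper's, and it works, though you leave the central computation unverified. The paper's proof proceeds structurally: it invokes the classification of simple $\ncFin$-representations from \cite{UniformlyPresentedVectorSpaces} ($D_0$, $D_1$, $C_2$, $C_3, \ldots$), observes that for each simple the dimension of $S_n$-invariants is $0$ or $1$ and nondecreasing in $n \geq 1$, and then concludes by additivity across a composition series (using that taking $S_n$-invariants is exact in characteristic~$0$ and that $H^i(\PConf^n M, \Q)$ has finite length). You instead build an explicit transfer-type map $T_n : V^{n,S_n} \to V^{n+1,S_{n+1}}$ and prove it injective via the retraction-like identity $R_{n+1}T_n = 2n \cdot \id + T_{n-1}R_n$. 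Your route is more constructive, avoids the classification of simples, and does not even require finite generation or the full $\ncFin$-structure --- it runs on the underlying cosimplicial vector space with $V^0 = 0$. The paper's route buys a uniform framework that also yields Corollary~\ref{co:cp} (replication maps) with the same simples.

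The relation you call a ``delicate combinatorial computation'' and defer is in fact a routine consequence of the cosimplicial identities and holds on all of $V^n$, not just invariants: writing $R_{n+1}T_n = \sum_{k=1}^{n}\sum_{j=1}^{n+1} s^k d^j$, exactly $2n$ of the $n(n+1)$ pairs have $j \in \{k, k+1\}$ and contribute $\id$, while the map $(j,k) \mapsto (j, k-1)$ on the region $j < k$ and $(j,k) \mapsto (j-1,k)$ on the region $j > k+1$ gives a bijection of the remaining $n(n-1)$ terms onto the index set $\{1,\ldots,n\} \times \{1,\ldots,n-1\}$ of $T_{n-1}R_n$, carrying $s^k d^j$ to the matching $d^a s^b$. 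Feeding this into your downward recursion, one finds that $T_n(v)=0$ forces $T_{n-j-1}(v_{j+1}) = -c_j v_j$ with $v_j = R_{n-j+1}\cdots R_n(v)$ and $c_j = (j+1)(2n-j)$, which is a positive integer for $0 \le j \le n-1$; since $V^0 = 0$ when $i \geq 1$, the bottom relation gives $v_{n-1}=0$ and the chain unwinds to $v=0$. Two small points to fix: you should verify the nonvanishing of these constants explicitly (it is where characteristic~$0$ enters), and the $i=0$ base case should be handled for disconnected $M$ as well (where $\dim H^0(\Conf^n M,\Q)$ is a nondecreasing weak-composition count rather than $1$). Finally, the ``cleaner alternative finish'' via free modules $P_S$ does not obviously close: knowing $\dim (P_S)^{n,S_n}$ is nondecreasing does not by itself transfer a dimension inequality to a quotient of a sum of such $P_S$'s, so the injectivity argument you sketch is the one that actually completes the proof.
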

The next corollary concerns the ``replication maps'' between configuration spaces defined by Cantero-Palmer \cite{canteropalmer}; this map replaces each of the $n$ points in a configuration with $m$ nearby points spaced out along a short ray in the direction of the nowhere-vanishing vector field. They show that these maps induce isomorphisms in cohomology groups.  We provide a new proof of \cite[Theorem B]{canteropalmer} in the special case of $\Q$ coefficients.

\begin{cor} \label{co:iso}
 Suppose $M$ satisfies \eqref{eq:assume} and admits a nowhere-vanishing vector field.  Then, for $n$ large enough relative to $i$ (if $\dim M = 2$, when $n \geq 2i$; otherwise when $n \geq i$) and any $m \geq 1$, the replication map $\Conf^{n} M \ra \Conf^{\, nm} M$ induces an isomorphism
$$
\dim H^i(\Conf^{\, nm} M, \mathbb{Q}) \overset{\sim}{\longrightarrow} \dim H^i(\Conf^{n} M, \mathbb{Q}).
$$
\label{co:cp}
\end{cor}

\section*{Acknowledgments}
The first author is supported by NSF Grant DMS-1402620 and a Guggenheim Fellowship.  The second author is supported by NSF Graduate Research Fellowship ID 2011127608 and the Algebra RTG at the University of Wisconsin, DMS-1502553.  The second author wishes to thank Aaron Mazel-Gee, Julian Rosen, and David Speyer for helpful conversations.  Both authors thank the hospitality of the Arizona Winter School, where this paper was initiated.

\section{Representation theory of $\ncFin$}

As we develop the representation theory of $\ncFin$, we will be able to deduce Corollaries \ref{cor:dims}, \ref{cor:characters}, \ref{co:monotone}, and \ref{co:cp} from Theorem \ref{thm:main}.  The proof of Theorem \ref{thm:main} appears in \S \ref{sect:mainproof}.

The \textbf{simplex category} $\Delta$ is the category of finite, non-empty, totally-ordered sets with weakly monotone maps.  We use the symbol $[n] = \{1, \ldots, n\}$ to denote the usual ordered set with $n$ elements.  Lemma~\ref{lem:dkrl} below is an elementary consequence of the classical Dold-Kan correspondence; see \cite{DoldClassical} or \cite[Section 8.4]{weibel} for a modern treatment.

Write $\Vect$ for the category of finite dimensional vector spaces over some field $k$.  The Dold-Kan correspondence gives an equivalence of categories between the functor category $\Vect^{\Delta}$ (the category of ``cosimplicial vector spaces'') and the category of $\Vect$\nobreak\mbox{-}\nobreak\hspace{0pt}cochain complexes supported in non-negative degree.  Under the correspondence, the cochain complex $k[-p]$ consisting of a single 1-dimensional vector space in cohomological degree $p$ becomes an Eilenberg--MacLane representation $K(k,p)$ satisfying $\dim K(k, p)[n] = \binom{n-1}{p}$.

\begin{lem}[Dold-Kan Representation Lemma] \label{lem:dkrl}
Let $V : \Delta \longrightarrow \Vect$ be a $\Delta$-representation over $k$.  The following conditions are equivalent:
\begin{enumerate}[(a)]
\item $V$ is finitely generated;
\item the sequence $\dim V[n]$ is bounded above by a polynomial in $n$;
\item the cochain complex corresponding to $V$ under Dold-Kan has finite support;
\item $V$ has finite length;
\item the sequence $\dim V[n]$ coincides with a polynomial in $n$.
\end{enumerate}
\end{lem}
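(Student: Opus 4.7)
The plan is to use the Dold--Kan correspondence as a bridge between the combinatorial hypotheses on $V$ and homological statements about the associated cochain complex $N^\bullet V$. The key quantitative ingredient, which follows by additivity from the paper's own computation $\dim K(k,p)[n] = \binom{n-1}{p}$, is a formula of the shape
\[
\dim V[n] \;=\; \sum_{p \geq 0} \binom{n-1}{p} \dim N^p V.
\]
Crucially, $\binom{n-1}{p}$ has degree exactly $p$ in $n$; this is what couples the support of $N^\bullet V$ to the growth of $\dim V[n]$.

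I would close the loop (c) $\Rightarrow$ (e) $\Rightarrow$ (b) $\Rightarrow$ (c) using only this formula. If $N^\bullet V$ has finite support, the right-hand side is a finite sum of polynomials in $n$, yielding (c) $\Rightarrow$ (e); the implication (e) $\Rightarrow$ (b) is tautological; and for (b) $\Rightarrow$ (c), observe that $N^p V \neq 0$ alone forces $\dim V[n] \geq \binom{n-1}{p}$, a polynomial of degree exactly $p$, so any polynomial upper bound of degree $d$ on $\dim V[n]$ forces $N^p V = 0$ for all $p > d$.

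The remaining implications (c) $\Rightarrow$ (d) $\Rightarrow$ (a) $\Rightarrow$ (b) are abelian-categorical. Since Dold--Kan is an equivalence of abelian categories, the simple $\Delta$-modules are precisely the $K(k,p)$'s (images of the simple complexes $k[-p]$), and a finite-support cochain complex of finite-dimensional vector spaces admits a finite composition series by iteratively splitting off one $k[-p]$ summand at a time; this gives (c) $\Rightarrow$ (d). The implication (d) $\Rightarrow$ (a) is immediate. For (a) $\Rightarrow$ (b), a finitely generated $V$ is realized via Yoneda as a quotient of a finite direct sum of representables $P_{[p_i]} := k[\Hom_\Delta([p_i], -)]$, and $\dim P_{[p]}[n] = |\Hom_\Delta([p], [n])| = \binom{n+p-1}{p}$ is polynomial in $n$, so $\dim V[n]$ is polynomially bounded. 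The main obstacle is really just verifying the opening dimension formula in the paper's convention $[n] = \{1, \ldots, n\}$; once that is in place, every implication reduces to either an algebraic triviality or a polynomial growth comparison.
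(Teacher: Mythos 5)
Your proof is correct and follows essentially the same approach as the paper: both rest on the Dold--Kan equivalence, the computation $\dim K(k,p)[n] = \binom{n-1}{p}$, and the polynomial growth of $\dim k\cdot\Delta([p],[n])$. You arrange the cycle of implications slightly differently and make the underlying dimension formula $\dim V[n] = \sum_p \binom{n-1}{p}\dim N^p V$ explicit (the paper leaves it implicit in its appeal to simple constituents), but the ingredients and reasoning are the same; the only stylistic nit is that ``iteratively splitting off one $k[-p]$ summand'' should be read as passing to a composition series rather than a direct-sum decomposition, since cochain complexes need not split into their simple constituents.
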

\begin{proof}
The cardinality of $\Delta([q], [n])$ (the set of morphisms from $[q]$ to $[n]$ in $\Delta$) is bounded above by the cardinality of $\Fin([q], [n])$, so a basic projective $k \cdot \Delta([q], -)$ has dimension sequence bounded above by $n^q$, and $(a) \implies (b)$.  (The functor $k \cdot \Delta([q], -)$ denotes the composite of the representable functor $[n] \mapsto \Delta([q],[n])$ and the functor sending a finite set to the free $k$-vector space on that set.)  If the cochain complex corresponding to $V$ has $k[-p]$ as a simple constituent, then the dimension sequence of $V$ can have no polynomial bound of degree less than $p$.  It follows that unbounded cochain complexes correspond to $\Delta$\nobreak\mbox{-}\nobreak\hspace{0pt}representations with degree sequences that grow faster than any polynomial, and so $(b) \implies (c)$.  Finitely-supported cochain complexes have finite length, so $V$ must have polynomial dimension sequence; in short, $(c) \implies (d) \land (e)$.  Since $(d) \implies (a)$ and $(e) \implies (b)$, we are done.
\end{proof}

\begin{proof}[Proof of Corollary \ref{cor:dims} on polynomiality of dimensions]
For any $p \in \mathbb{N}$, the dimension sequence of a basic projective $\ncFin$-representation $k \cdot \ncFin([p],-)$ is bounded above by a polynomial, so the Dold-Kan representation Lemma \ref{lem:dkrl} gives that it is finite length when restricted to $\Delta$.  Any finitely generated $\ncFin$-representation is a quotient of a finite sum of these basic projectives; in particular, the $\ncFin$-representation $H^i(\PConf^n M, k)$ is such a quotient by Theorem~\ref{thm:main}.  Reapplying Lemma \ref{lem:dkrl} proves Corollary \ref{cor:dims}.  For the bound on degree, we recall from \cite[Th 6.3.1, Rem 6.3.3]{CEF} that $H^i(\PConf^n M, k)$ is bounded above by a polynomial of degree $i$ when $\dim M \geq 3$, or $2i$, if $\dim M = 2$.  The proof there has a running assumption that $k$ has characteristic $0$, but if only an upper bound is desired, that condition is superfluous; the proof uses a filtration of $H^i(\PConf^n M, k)$ by the $E^\infty$ page of a spectral sequence constructed by Totaro, whose entries are shown in the proof of \cite[Th 6.3.1]{CEF} to have dimension bounded above by a polynomial of degree $i$ (resp. $2i$), and this spectral sequence doesn't require rational coefficients.
\end{proof}

\begin{thm} \label{thm:ncfgfl}
An $\ncFin$-representation is finitely generated if and only if it is finite length.
\end{thm}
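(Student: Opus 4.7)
The easy direction, that finite length implies finitely generated, is immediate by choosing a nonzero vector from each term of a composition series. For the converse, the plan is to transport the finite length property across the restriction functor $\psi^{*}$ from $\ncFin$-representations to $\Delta$-representations induced by $\psi : \Delta \to \ncFin$, using Lemma \ref{lem:dkrl}.

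The first step is to observe that the basic projective $k \cdot \ncFin([p], -)$ has dimension sequence
\beq
\dim\bigl(k \cdot \ncFin([p], [n])\bigr) \;=\; n(n+1)(n+2) \cdots (n+p-1),
\eeq
a polynomial of degree $p$ in $n$: a morphism in $\ncFin([p], [n])$ amounts to a partition of $[p]$ into an ordered $n$-tuple of linearly ordered sequences, which can be built by inserting the $p$ elements one at a time, with exactly $n + (j-1)$ positions available for the $j^{\text{th}}$ insertion. Any finitely generated $\ncFin$-representation $V$ is a quotient of a finite sum of such basic projectives, so its dimension sequence is also bounded above by a polynomial. Restriction along $\psi$ preserves values at objects, hence dimension sequences, so Lemma \ref{lem:dkrl} (implication $(b) \Rightarrow (d)$) yields that $\psi^{*} V$ has finite length as a $\Delta$-representation.

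The final step is to show that restriction reflects strict inclusions: if $W \subsetneq W'$ is a proper inclusion of $\ncFin$-subrepresentations, then $\psi^{*} W \subsetneq \psi^{*} W'$ as $\Delta$-subrepresentations. This holds because subrepresentations in either category are simply subfunctors, determined by their values at objects, and $\psi^{*}$ preserves those values. Consequently, any strictly ascending chain of $\ncFin$-subrepresentations of $V$ maps to a strict chain of $\Delta$-subrepresentations of $\psi^{*} V$ of the same length, so $V$ inherits a composition length bounded by that of $\psi^{*} V$. No step here appears to present a serious obstacle; the only slightly delicate combinatorial input is the dimension count for the basic projective, while the passage from finite length on $\Delta$ back to finite length on $\ncFin$ is formal.
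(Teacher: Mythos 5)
Your proof follows the same route as the paper: restrict along $\psi : \Delta \to \ncFin$, bound the dimension sequence of a finitely generated $\ncFin$-representation by a polynomial via the basic projectives $k \cdot \ncFin([p],-)$, invoke Lemma~\ref{lem:dkrl} to get finite length as a $\Delta$-representation, and conclude because strict inclusions of $\ncFin$-subrepresentations remain strict after restriction. The only genuine addition is your explicit count $\dim k \cdot \ncFin([p],[n]) = n(n+1)\cdots(n+p-1)$, which is correct and slightly sharpens the paper's bare assertion of a polynomial bound.
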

\begin{proof}
We will show that every finitely generated $\ncFin$-representation restricts to a finite length $\Delta$-representation; since any nontrivial inclusion of $\ncFin$-representations is in particular a nontrivial inclusion of $\Delta$-representations, the theorem follows.  Every finitely generated $\ncFin$\nobreak\mbox{-}\nobreak\hspace{0pt}representation is a quotient of a finite sum of representations of the form $k \cdot \ncFin([p],-)$ for various $p \in \mathbb{N}$, so it suffices to prove the theorem on these basic projectives.  As before, the dimension of $k \cdot \ncFin([p],[n])$ is bounded above by a polynomial in $n$; the claim then follows from the Dold-Kan representation Lemma~\ref{lem:dkrl}.
\end{proof}
\begin{thm} \label{thm:descend}
If $W$ is a simple $\Fin$-representation, then $W \circ \phi$ is a simple $\ncFin$-representation.  Further, every simple $\ncFin$-representation arises in this way.
\end{thm}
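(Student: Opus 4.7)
The plan is to prove both directions by analyzing the subrepresentation $N \subseteq V$ that measures the failure of $V$ to descend along $\phi$; the first direction will follow easily from the near-surjectivity of $\phi$, while the second hinges on ruling out the case $N = V$.

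For the first direction, I will show that $\phi^*$ preserves simples by using that $\phi$ is bijective on objects and surjective on morphism sets: any set map $S \to T$ is the underlying map of some $\ncFin$-morphism (just choose any total ordering on each fiber). Consequently, a collection of subspaces of $W$ is closed under the $\Fin$-action if and only if it is closed under the $\ncFin$-action on $\phi^* W$, so the lattice of $\ncFin$-subreps of $\phi^* W$ coincides with the lattice of $\Fin$-subreps of $W$. In particular, $\phi^* W$ is simple whenever $W$ is.

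For the converse, given a simple $\ncFin$-representation $V$, I define
\[
N(T) \;=\; \mathrm{span}\bigl\{V(f_1)(v) - V(f_2)(v) : f_1, f_2 \in \ncFin(S, T),\; \phi(f_1) = \phi(f_2),\; v \in V(S)\bigr\}.
\]
The assignment $N$ is an $\ncFin$-subrepresentation of $V$: for any $g : T \to U$ one has
\[
V(g)\bigl(V(f_1)(v) - V(f_2)(v)\bigr) = V(gf_1)(v) - V(gf_2)(v) \in N(U),
\]
using $\phi(gf_1) = \phi(g)\phi(f_1) = \phi(g)\phi(f_2) = \phi(gf_2)$. By simplicity of $V$, either $N = 0$ or $N = V$. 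In the first case, $V(f_1) = V(f_2)$ whenever $\phi(f_1) = \phi(f_2)$, so $V$ factors uniquely as $\phi^* W$ for some $\Fin$-representation $W$, which is simple by the first part.

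The main obstacle is to rule out $N = V$. A conceptual framing is that $V/N$ agrees pointwise with the left Kan extension $\phi_! V$ (so $V/N = \phi^* \phi_! V$), and the adjunction $\Hom_{\ncFin}(V, \phi^* W) \cong \Hom_{\Fin}(\phi_! V, W)$ shows that $N = V$ is equivalent to the vanishing of $\Hom_{\ncFin}(V, \phi^* W)$ for every $\Fin$-representation $W$. To contradict this, I would exhibit a nonzero $\ncFin$-map from $V$ into some pullback $\phi^* W$, working at the minimal $n$ with $V([n]) \neq 0$ and using simplicity of $V$ together with the combinatorics of fiber-preserving permutations to extract a nonzero $\Fin$-equivariant quotient of $V$. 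The crux is that the fiber orderings carried by $\ncFin$-morphisms cannot produce a genuinely new simple representation beyond those visible through $\Fin$; making this precise, by controlling how fiber-permuting endomorphisms act on structure maps out of $V([n])$, is the heart of the proof.
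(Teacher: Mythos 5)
Your first direction is fine and matches the paper: $\phi$ is full and surjective on objects, so $\phi^*$ induces an isomorphism between the subrepresentation lattices, and simplicity descends.

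The second direction has a genuine gap. You set up a clean framework (define $N$, note it is a subrepresentation, use simplicity to conclude $N=0$ or $N=V$, observe $N=0$ finishes the proof), but the crucial step of ruling out $N=V$ is only gestured at — your final paragraph says ``I would exhibit\ldots'' and ``making this precise\ldots is the heart of the proof'' without actually producing the argument. The Kan-extension/adjunction framing recasts the problem but does not solve it. The missing idea, which is exactly what the paper supplies, is this: let $m$ be minimal with $V[m]\neq 0$ and let $v\in V[m]$ be nonzero; since $V$ is simple, $v$ generates, so every element of $V[n]$ is a linear combination of $V(f)(v)$ with $f\colon[m]\to[n]$. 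Given $p,q$ with $\phi(p)=\phi(q)$, one checks $V(p)=V(q)$ on these spanning vectors: if $\phi(p\circ f)$ is injective then $p\circ f=q\circ f$ (injections lift uniquely from $\Fin$ to $\ncFin$), and if not, then $p\circ f$ and $q\circ f$ both factor through some $[r]$ with $r<m$, where $V[r]=0$ by minimality, so both sides vanish. This same minimality argument would also close your gap: rewriting every generator of $N[n]$ as $V(h_1)(v)-V(h_2)(v)$ with $h_1,h_2\colon[m]\to[n]$ (possible because $v$ generates) and observing that $h_1\neq h_2$ with $\phi(h_1)=\phi(h_2)$ forces non-injectivity, hence factorization through $[r]$ with $r<m$, gives $N=0$ directly — no dichotomy needed. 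Without this, your proof is incomplete.
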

\begin{proof}
The first claim is clear, since the functor $\phi$ is full and surjective on objects.  In the other direction, we must show that any two $\ncFin$-morphisms that coincide after applying $\phi$ induce the same map under any simple $\ncFin$-representation.

Given a simple $\ncFin$-representation $V$, let $m$ be the smallest integer so that $V[m] \neq 0$, and pick some non-zero vector $v \in V[m]$.  Since $V$ is simple, $v$ generates the entire representation.  In particular, the vectors $(Vf)(v)$ with $f \in \ncFin([m], [n])$ span $V[n]$.  Given any two morphisms $p, q \in \ncFin([n], [n'])$ such that $\phi(p)=\phi(q)$, we may check that the induced maps $Vp$ and $Vq$ are equal by checking that they act the same way on the spanning vectors $(Vf)(v)$.

Note that $\phi(p \circ f) = \phi(p) \circ \phi(f) = \phi(q) \circ \phi(f) = \phi(q \circ f)$.  If $\phi(p \circ f)$ is an injection, then $p \circ f = q \circ f$ since injections in $\Fin$ have unique lifts to $\ncFin$.  In this case, it follows that $V(p \circ f) = V(q \circ f)$.  If $\phi(p \circ f)$ is not an injection, then both maps $V(p \circ f)$ and $V(q \circ f)$ factor through some $V[r]$ with $r < m$.  But $V[r] = 0$ since $m$ is minimal, and so $V(p \circ f) = 0 = V(q \circ f)$.  Thus, in either case $V(p \circ f) = V(q \circ f)$.  It follows that $V(p)(V(f)(v)) = V(q)(V(f)(v))$, and the claim is proved.
\end{proof}

\begin{cor} \label{cor:charpoly}
If the field $k$ has characteristic zero, the character of any finitely generated $\ncFin$-representation $V$ is a polynomial in the cycle-counts $X_1, X_2, \ldots$ for all $n > 0$.
\end{cor}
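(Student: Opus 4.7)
The plan is a three-step reduction ending in an explicit character computation for simple $\Fin$-representations.

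First, I will invoke Theorem~\ref{thm:ncfgfl}: the finitely generated $\ncFin$-representation $V$ has finite length. Since $S_n$-characters are additive along short exact sequences, and the set of character polynomials is closed under sums, the character of $V[n]$ is the sum of the characters of the composition factors of $V$; it therefore suffices to prove the corollary for simple $V$. Next, Theorem~\ref{thm:descend} identifies any simple $\ncFin$-representation with $W \circ \phi$ for a simple $\Fin$-representation $W$, and since $\phi \colon \ncFin \to \Fin$ acts as the identity on bijection-morphisms (singleton fibers carry a unique ordering), $V[n] = W[n]$ as $S_n$-representations. The problem thus reduces to showing that every simple $\Fin$-representation has character polynomial character.

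For this last step, let $m$ be the minimal integer with $W[m] \neq 0$ and set $\rho := W[m]$, an irreducible $S_m$-representation. The explicit description I plan to establish is that for all $n \geq m$,
$$
W[n] \;\cong\; \Ind_{S_m \times S_{n-m}}^{S_n}(\rho \boxtimes \mathbf{1}),
$$
with the $\Fin$-module structure sending $v \otimes \iota \in W[n]$ (for $\iota \colon [m] \hookrightarrow [n]$) under $f \colon [n] \to [n']$ to $v \otimes (f \circ \iota)$ when $f \circ \iota$ is still injective, and to zero otherwise. Granting this, summing $\chi_\rho(\sigma|_T)$ over $\sigma$-stable $m$-subsets $T \subset [n]$ and reorganizing by the cycle type of $\sigma|_T$ yields
$$
\chi_{W[n]}(\sigma) \;=\; \sum_{\lambda \vdash m} \chi_\rho(\lambda) \prod_{\ell \geq 1} \binom{X_\ell(\sigma)}{a_\ell(\lambda)},
$$
where $a_\ell(\lambda)$ denotes the number of parts of $\lambda$ equal to $\ell$; this expression is visibly a character polynomial.

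The main obstacle is justifying the induced-representation description of $W[n]$. I plan to realize $W$ as a quotient of the free $\Fin$-module $P_\rho := \rho \otimes_{k[S_m]} k \cdot \Fin([m], -)$ and to analyze its kernel: any non-injective $f \colon [m] \to [n]$ factors through some $[k]$ with $k < m$, so the vector $v \otimes f$ in $P_\rho[n]$ is pulled up from $P_\rho[k]$ via the $\Fin$-structure, and the submodule generated by $P_\rho[k]$ for $k < m$ must therefore lie in the kernel of $P_\rho \twoheadrightarrow W$ by simplicity and the vanishing $W[k] = 0$. What remains is to verify that the resulting ``injection-only'' $\Fin$-module is itself simple, which should follow from showing that the evaluation maps along surjections $[n] \twoheadrightarrow [m]$ collectively separate points at each level.
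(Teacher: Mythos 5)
Your first two reductions are exactly the paper's: use Theorem~\ref{thm:ncfgfl} to pass to composition factors, then Theorem~\ref{thm:descend} to identify simple $\ncFin$-representations with simple $\Fin$-representations pulled back along $\phi$. The paper then stops and cites Rains \cite[Theorem 4.1]{rains} (resting on Putcha's work \cite{fulltransformationmonoid}) for the fact that simple $\Fin$-representations in characteristic zero have characters given by character polynomials, whereas you attempt a self-contained proof of this last fact. That ambition is reasonable, but the specific claim you rely on is false, and the gap is real.

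Concretely, you assert that a simple $\Fin$-representation $W$ with minimal nonzero degree $m$ and $\rho := W[m]$ satisfies
$$
W[n] \;\cong\; \Ind_{S_m \times S_{n-m}}^{S_n}(\rho \boxtimes \mathbf{1}) \qquad (n \geq m),
$$
i.e.\ that $W$ is the ``injection-only'' module $M(\rho)$ obtained from the free module $P_\rho$ by killing the submodule generated in degrees below $m$. But $M(\rho)$ is not simple in general. Take $m = 1$ and $\rho = \mathbf{1}$: then $M(\rho)$ is exactly the module the paper calls $C_1$, with $C_1[n] = k^n$ the permutation module. The augmentation $C_1[n] \to k$, $e_i \mapsto 1$, commutes with every $\Fin$-arrow (since $C_1(f)(e_i) = e_{f(i)} \mapsto 1$), giving a surjection $C_1 \twoheadrightarrow D_1$ onto the one-dimensional-in-each-positive-degree simple $D_1$, with nonzero kernel (the sum-zero vectors). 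So $C_1$ is not simple, and the simple $D_1$ --- which has $m=1$ and $\rho = \mathbf{1}$ --- has $\dim D_1[n] = 1$, not $n$ as your formula predicts. Consequently the character formula you derive is the character of $M(\rho)$, not of the simple $W$, and the last step (``verify that the injection-only $\Fin$-module is itself simple'') cannot go through.

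The argument is salvageable, but requires an extra idea you haven't included: although $M(\rho)$ is not simple, it is finite length, it has a unique simple quotient $L(\rho)$ with $L(\rho)[m] = \rho$, and all other composition factors of $M(\rho)$ are simples $L(\rho')$ with $\rho'$ an irreducible representation of some $S_{m'}$, $m' < m$ (because the kernel of $P_\rho \twoheadrightarrow M(\rho) \twoheadrightarrow L(\rho)$ vanishes in degree $m$). This makes the decomposition matrix expressing the $[M(\rho)]$ in terms of the $[L(\rho')]$ unipotent upper-triangular, hence invertible over $\mathbb{Z}$; so the characters of the simples $L(\rho)$ are $\mathbb{Z}$-linear combinations of the characters of the $M(\rho')$, and your character-polynomial computation for $M(\rho)$ then does the job. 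Alternatively, you can simply cite Rains and Putcha as the paper does.
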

\begin{proof}
By the previous Theorems \ref{thm:ncfgfl} and \ref{thm:descend}, it suffices to check the property on the simple $\Fin$-representations.  Rains \cite[Theorem 4.1]{rains} deduces this property from work of Putcha \cite{fulltransformationmonoid} on the representations of the endomorphism monoid of a finite set.  A separate proof appears in \cite[\S6.7.3]{UniformlyPresentedVectorSpaces}.
\end{proof}
\begin{proof}[Proof of Corollary \ref{cor:characters} on polynomiality of characters]
By Theorem \ref{thm:main}, the $\ncFin$-representation $H^i(\PConf^n(M), \Q)$ is finitely generated.  It follows from Corollary \ref{cor:charpoly} that its character is given by a polynomial in the cycle-counts.  The bound on degree is proved as in the proof of Corollary~\ref{cor:dims}.
\end{proof}

\begin{proof}[Proof of Corollaries ~\ref{co:monotone} and \ref{co:cp} on cohomology of unordered configuration spaces]

Working with rational coefficients, $H^i(\Conf^n M, \mathbb{Q}) = H^i(\PConf^n M, \mathbb{Q})^{S_n}$.  Following the notation of \cite{UniformlyPresentedVectorSpaces}, the only simple $\ncFin$-representations in which trivial $S_n$-representations appear are those denoted $D_0, D_1, C_2, C_3, C_4, \ldots$, which we now describe explicitly.

A basis for $C_k[n]$ is given by all $k$-element subsets $ S \subseteq [n]$.  An $\ncFin$-arrow $f : [n] \longrightarrow [n']$ acts by $S \mapsto \phi(f)(S)$, provided $\phi(f)(S)$ still has $k$ elements; if $\phi(f)(S)$ has fewer than $k$ elements, then $S \mapsto 0$.  In particular, $\dim C_k^{S_n}$ is $1$ when $n \geq k$ and $0$ when $n < k$.  The simple $D_1$ is one-dimensional at every object except $[0]$, with arrows acting by the one-by-one matrix $(1)$; $D_0$ is one-dimensional at $[0]$ and zero everywhere else.  

From this description, we see that every simple $\ncFin$-representation $V$ defined over $\mathbb{Q}$ satisfies $0 \leq \dim V[n]^{S_n} \leq \dim V[n+1]^{S_{n+1}} \leq 1$ for $n \geq 1$, and so $\dim H^i(\Conf^n M, \mathbb{Q})^{S_n} \leq \dim H^i(\Conf^{n+1} M, \mathbb{Q})^{S_{n+1}}$.  This proves Corollary~\ref{co:monotone}.

In order to deduce the claim about replication maps, note that the simples appearing in the $\ncFin$-representation $H^i(\Conf^n M, \mathbb{Q})$ cannot grow faster than a polynomial of degree $d=2i$ ($d=i$ if the manifold has dimension at least $3$) by the bounds on $\dim H^i(\Conf^n M, \mathbb{Q})$ given in \cite[Theorem 6.3.1 and Remark 6.3.3]{CEF}.  Since $n \mapsto \dim C_k[n]$, $n \geq 1$, is a polynomial of degree $k$, we are only concerned with the simples $D_1, C_2, C_3, \ldots, C_{d}$.

Let $V$ be one of these simples.  Given a finite set map $f : [n] \longrightarrow [n']$ whose image has size at least $d \geq 1$, we show that the composite $\overline{f} : V[n]^{S_n} \longrightarrow V[n] \overset{f_*}{\longrightarrow} V[n'] \longrightarrow V[n']_{S_{n'}} \overset{\sim}{\longrightarrow} V[n']^{S_{n'}}$ is an isomorphism.  The case of $V=D_1$ is immediate.  If $V = C_k$ with $k \leq d$, then pick a $k$-element subset of $[n]$ so that $f$ restricts to an injection on these elements.  It follows that this subset maps via $f_*$ to a nonzero vector, and that $\overline{f}$ maps the average of all $k$-element subsets to a nonzero vector.  Recalling that the source and target of $\overline{f}$ are at most one-dimensional, we are done.

It follows that the map $\overline{f}$ is an isomorphism for any simple whose dimension grows as a polynomial of degree at most $d$, and hence for any finite extension of such simples, by the $5$-lemma.  The claim follows for the finite length $\ncFin$-representation  $H^i(\Conf^n M, \mathbb{Q})$.  This proves Corollary~\ref{co:cp}.
\end{proof}

\section{The space of embedded disks} \label{sec:emb}
One convenient proof of Theorem \ref{thm:main} uses the vector field to flow points apart from one another.  Up to homotopy, all the choices involved in this construction become irrelevant; this gives a homotopy action of $\ncFin^{op}$ on $\PConf$.  However, we aim to prove Theorem \ref{thm:main} by working with real spaces, and not just homotopy types.  As a consequence, the proof retains more geometry, and provides a stronger result, Theorem \ref{thm:refinedmain}.

In furtherance of this plan, we build a more-flexible sequence of spaces $X_n \simeq \PConf^nM$ that are homotopy equivalent to configuration spaces but allow more structure maps.  The spaces $X_n$ will be spaces of embedded disks, and the new structure maps will come from restricting along embedded disks within disks.  The convenient proof about flowing points can be recovered by imagining tangentially-embedded closed $1$-disks---integral curves for the vector field.
\subsection{Set-up}
Write $\mathbb{D}^l$ for the closed unit ball in $\mathbb{R}^l$, which is a smooth manifold with boundary, and $\mathbb{D}^l_n = \mathbb{D}^l \times [n]$, for the disjoint union of $n$ copies of this manifold.  Write $C^\infty(\mathbb{D}^l_n, M)$ for the set of smooth maps $\mathbb{D}^l_n \to M$, equipped with the ``smooth compact-open topology,'' which is also known as the weak $C^\infty$ topology, but in this case coincides with the strong topology, since $\mathbb{D}^l_n$ is compact.

\subsection{Properties of the weak topology}
We shall need several properties of the weak topology; a good reference is the textbook \cite{Hirsch}.  As a sanity check, $C^\infty([n], M) \cong M^n$, and in particular, $C^\infty([1], M) \cong M$.  For any triple of smooth manifolds $M_1, M_2, M_3$, the composition map
$$
C^\infty(M_1, M_2) \times C^\infty(M_2, M_3) \to C^\infty(M_1, M_3)
$$
is continuous \cite[\S2.2.4~ex.~10]{Hirsch}.  This fact enriches the category of smooth maps in topological spaces.  Also, setting $M_1 = [1]$ to be a single point, we find that evaluation of a smooth map at a point is jointly continuous in the map and the point.  Similarly, differentiation induces a continuous map
$$
C^\infty(M_1, M_2) \to C^\infty(TM_1, TM_2),
$$
guaranteeing that differentiation is continuous in the map and the point.

We shall also need the fact that $C^\infty(\mathbb{D}^l_n, M)$ is a metric space, hence paracompact \cite[Thm. 2.4.4 and subsequent discussion]{Hirsch}, and that the subset of embeddings, written $\Emb(M_1, M_2) \subseteq C^\infty(M_1, M_2)$, is open \cite[Thm. 2.1.4]{Hirsch}.  Since the composite of two embeddings is again an embedding, this makes $\Emb$ into a $\Top$-enriched category as well.  Finally, we make frequent implicit use of the partial evaluation map
\begin{equation} \label{equation:partial_evaluation}
C^\infty(M_1 \times M_2, M_3) \to C^0(M_1, C^\infty(M_2, M_3)),
\end{equation}
where $C^0$ denotes continuous maps between topological spaces in the compact-open topology.  This fact lets us specify continuous maps by giving a formula.  For example, we show continuity of the straight-line path from the identity map on $\mathbb{D}^l_n$ to the central map $(x, i) \mapsto (0,i)$.

\begin{exmp}
The map $\sigma \colon [0,1] \to C^\infty(\mathbb{D}^l_n, \mathbb{D}^l_n)$ given by
$\sigma(t) =  \big((x, i) \mapsto (tx, i) \big)$ is continuous.  To see why, consider the map $(t, x, i) \mapsto (tx, i)$, which is plainly smooth, and so provides an element of $C^\infty([0,1] \times \mathbb{D}^l_n, \mathbb{D}^l_n)$.  Applying \eqref{equation:partial_evaluation}, we find $\sigma \in C^0([0,1], C^\infty(\mathbb{D}^l_n, \mathbb{D}^l_n))$, and so $\sigma$ is continuous.
\end{exmp}
\subsection{The space of embeddings adapted to smoothly-chosen $l$-frame} \label{sec:adapted}
Suppose $M$ carries $l$ everywhere linearly independent smooth vector fields, encoded as a map from a trivial bundle
$$
f \colon M \times \mathbb{R}^l \to TM
$$
so that, for every point $p \in M$, the linear map $f(p) \colon \mathbb{R}^l \to T_pM$ is an injection.  Suppose $\varphi \colon \mathbb{D}^l_n \to M$ is an embedding, and that $(x, i) \in \mathbb{D}^l_n$ is a point, and write $p = \varphi(x, i)$.

From this set-up, there are two $l$-frames at $p$ that arise naturally: $f(p)$, and the derivative $D\varphi(p) \colon \mathbb{R}^l \to T_pM$, which is an injection since $\varphi$ is an embedding.  The next definition asks that every convex combination of these two frames also have full rank.

\begin{defn} \label{defn:adapted}
A map $\varphi \colon \mathbb{D}^l_n \to M$ is \textbf{adapted to $f$ at $(x, i)$} if the linear map
$$
t \cdot D\varphi(x, i) \, + \, (1-t) \cdot f(\varphi(x, i))
$$ 
has full rank for all $t \in [0, 1]$.  We say, simply, ``$\varphi$ is \textbf{adapted to $f$}'' if it is adapted to $f$ at every point $(x, i) \in \mathbb{D}^l_n$.  Write
$$
\Emb^f(\mathbb{D}^l_n, M) \subseteq C^\infty(\mathbb{D}^l_n, M)
$$
for the subset of embeddings adapted to $f$.
\end{defn}
For example, if $l=1$ and $\varphi$ is an integral curve for the vector field $f$, then the derivative of $\varphi$ matches $f$ exactly, and so $\varphi$ is adapted to $f$ trivially.  However, the definition is more flexible: as long as the derivative maintains a direct line of sight to the frame, all is well.

The goal of \S\ref{sec:emb}, achieved in Theorem \ref{thm:equiv}, is to show that the central evaluation map
$$
\begin{array}{ccc}
\zeta \colon \Emb^f(\mathbb{D}^l_n, M) & \to & \PConf^n(M) \\
\varphi & \mapsto & \big(\varphi(0,1), \, \ldots, \, \varphi(0,n)\big)
\end{array}
$$
is a homotopy equivalence.

\begin{rem}
It would be simpler to require that $\varphi$ be exactly tangent to the frame at every point, and indeed, this approach works well for $l=1$.  However, for $l \geq 2$, there may be pairs of vector fields that are everywhere linearly independent, but which do not admit integral surfaces.  In such cases, the space of embeddings is empty, and therefore unlikely to be homotopy equivalent to configuration space $\PConf^n M$.
\end{rem}

\begin{prop} \label{prop:adapted_open}
The subset $\Emb^f(\mathbb{D}^l_n, M)$ is open in $C^\infty(\mathbb{D}^l_n, M)$.  
\end{prop}
\begin{proof}
In the $C^\infty$ topology, the differentiation and evaluation maps
$$
\begin{array}{ccc}
(\varphi, x, i) \mapsto D\varphi(x, i) & \mbox{and} & (\varphi, x, i) \mapsto \varphi(x, i)
\end{array}
$$
are continuous, and consequently the map
$$
(t, x, i, \varphi) \mapsto t \cdot D\varphi(x, i) \, + \, (1-t) \cdot f(\varphi(x, i))
$$
is continuous as well.  Consider the preimage of the open subset of full-rank matrices.  If $\varphi$ is adapted to $f$, then this preimage contains the slice $[0,1] \times \mathbb{D}^l_n \times \{\varphi\}$.  Since $[0,1] \times \mathbb{D}^l_n$ is compact, the tube lemma provides a $C^\infty$ open neighborhood of $\varphi$ that consists of embeddings adapted to $f$.
\end{proof}

\subsection{The category $\mathcal{F}E_l$ and its functor to the category of embeddings}
Define a category $\mathcal{F}E_l$ with objects $[n]$ for $n \in \mathbb{N}$, and a topological space of morphisms $[n] \to [n']$ corresponding to embeddings $\mathbb{D}^l_n \to \mathbb{D}^l_{n'}$ that restrict to a composite of positive scalings and translations on every $\mathbb{D}^l \times \{i\}$.
Equivalently, $\mathcal{F}E_l([n], [1])$ is the space of embeddings $\mathbb{D}^l_n \to \mathbb{D}^l_1$ of the form $(x, i) \mapsto a_ix +b_i$ for various $a_i \in \mathbb{R}_{>0}$ and $b_i \in \mathbb{D}^l$, and a general morphism of $\mathcal{F}E_l$ is a disjoint union of $n'$ such maps.

Consequently, and using \eqref{equation:partial_evaluation}, there is a continuous map
$$
\mathcal{F}E_l([n], [n']) \to \Emb(\mathbb{D}^l_n, \mathbb{D}^l_{n'})
$$
for each $n, n' \in \mathbb{N}$, and this map respects composition.  In other words, there is a $\Top$-enriched functor $\xi \colon \mathcal{F}E_l \to \Emb$ given by $\xi[n] = \mathbb{D}^l_n$.

\begin{prop} \label{prop:action}
The assignment $[n] \mapsto \Emb^f(\mathbb{D}^l_n, M)$ extends to a functor $\mathcal{F}E_l^{op} \to \mathrm{Top}$ where
$$
\mathcal{F}E_l([n], [n']) \times \Emb^f(\mathbb{D}^l_{n'}, M) \mapsto \Emb^f(\mathbb{D}^l_n, M)
$$
is given by precomposition.
\end{prop}
\begin{proof}
Use $\xi$, and note that positive rescaling preserves being adapted to $f$.
\end{proof}

\subsection{Finding a subdisk that is adapted to $f$}.
We argue that any map adapted to $f$ at the center points $(0,1), \ldots, (0,n) \in \mathbb{D}^l_n$ has a small neighborhood of these points where the map is an embedding that is adapted to $f$.  We will need a standard lemma.
\begin{lem} \label{lem:embedding_neighborhood} 
Let $N, M$ be smooth manifolds, and suppose that $K \subseteq N$ is a compact subset.  If $\phi \colon N \to M$ is a smooth map so that $\phi |_K$ is an injection and $D\phi(x)$ is a linear injection for all $x \in K$, then $K$ has an open neighborhood $V \subseteq N$ so that $\phi|_V$ is an embedding.
\end{lem}
\begin{proof}
See, for example, \cite[2.1 Ex. 7]{Hirsch} or \cite{MSE1782385}.
\end{proof}
The next result takes $K = \{(0,1), \ldots, (0,n)\} \subseteq \mathbb{D}^l_n$ to be the centers of the $l$-disks.
\begin{prop} \label{prop:get_alpha}
If $\varphi \colon \mathbb{D}^l_n \to M$ is a smooth map that is adapted to $f$ at every $(0, i) \in K$, and if $\varphi|_K$ is an injection, then for sufficiently-small $\alpha \in (0,1]$, the map $(x, i) \mapsto \varphi(\alpha \cdot x, i)$ is an embedding that is adapted to $f$.
\end{prop}
\begin{proof}
Since $\varphi$ is adapted to $f$ at each point $(0,i)$, the matrices $t \cdot D\varphi(0, i) + (1-t) \cdot f(\varphi(0, i))$ have full rank for every $t \in [0,1]$.  Setting $t = 1$, we see that $D\varphi(0,i)$ has full rank for all $i \in [n]$.  By Lemma \ref{lem:embedding_neighborhood}, we may pick $\alpha_0$ so that $(x, i) \mapsto \varphi(\alpha_0 \cdot x, i)$ is an embedding.  Since
$$
(x, i, t) \mapsto t \cdot D\varphi(\alpha_0 x, i) \, + \, (1-t) \cdot f(\varphi(\alpha_0 \cdot x, i))
$$
is continuous, the preimage of the full-rank matrices is open.  We claim that this preimage contains the slice $\{0\} \times [n] \times [0,1]$.  Assuming the claim, the tube lemma supplies $\alpha_1 \in (0,1]$ so that $(\alpha_1 \cdot \mathbb{D}^l_n) \times [0,1]$ also maps to full-rank matrices.  Setting $\alpha = \alpha_1 \alpha_0$ completes the proof.

To prove the claim, we show that for any $i \in [n]$ and $t \in [0,1]$,
$$
t \cdot \alpha_0 D\varphi(0,i) + (1-t) \cdot f(\varphi(0,i))
$$
is a non-zero scalar multiple of a matrix that we have already assumed has full rank.  If $t \in \{0,1\}$, the claim is clear.  If $t \in (0,1)$, then $t \cdot \alpha_0 > 0$ and $1-t > 0$, and so we may divide by the sum of these numbers to obtain a convex combination of $D\varphi(0,i)$ and $f(\varphi(0,i))$.  Every such convex combination is full rank by assumption.
\end{proof}

\subsection{Riemannian structure, the exponential map, and tiny embeddings}
Fix a Riemannian structure on $M$, and write $\exp \colon TM \supseteq U \to M$ for the exponential map, where $U$ is some fiberwise convex open neighborhood of the zero section.  Pick $U$ small enough so
$$
(\pi_M, \exp) \colon U \to M \times M
$$
is a diffeomorphism onto an open set $L \subseteq M \times M$ containing the diagonal.  If $(p, p') \in L$, we write $\log_p(p') = (\pi_m, \exp)^{-1}(p, p')$.  Since $U$ was chosen to be fiberwise convex, if $p', p'' \in M$ are both in the domain of $\log_p$, then any convex combination
\begin{equation} \label{equation:convex}
t \cdot \log_p(p') \, + \, (1-t) \cdot \log_p(p'')\;\;\;\;\;\;\; \mbox{ for $t \in [0, 1]$}
\end{equation}
is in the domain of the exponential map.
\begin{defn}
An embedding $\varphi \colon \mathbb{D}^l_n \to M$ is \textbf{tiny} if $(\varphi(0, i), \varphi(x, i)) \in L$ for all $(x, i) \in \mathbb{D}^l_n$. 
\end{defn}
\noindent
If $\varphi$ is tiny, then $\log_{\varphi(0,i)} \varphi(x, i) \in T_{\varphi(0, i)}M$ exists and depends smoothly on $(x, i) \in \mathbb{D}^l_n$.
\begin{prop} \label{prop:tiny_open}
The tiny embeddings form an open subset of $C^\infty(\mathbb{D}^l_n, M)$.  If $\varphi$ is an embedding, then for small-enough $\alpha \in (0,1]$, the embedding $(x, i) \mapsto \varphi(\alpha \cdot x, i)$ is tiny.
\end{prop}
\begin{proof}
The map $(\varphi, x, i) \mapsto (\varphi(0, i), \varphi(x, i))$ is continuous.  If $\varphi$ is tiny, then $\{\varphi\} \times \mathbb{D}^l_n$ is contained in the preimage of the open set $L$; the first claim then follows from the tube lemma.  We address the second claim.  Since $(x, i) \mapsto (\varphi(0,i), \varphi(x,i))$ is continuous, the preimage of $L$ is an open neighborhood of the slice $\{0\} \times [n]$ in $\mathbb{D}^l_n$.  
For sufficiently-small $\alpha \in (0,1]$, the multiplication map $(x, i) \mapsto (\alpha \cdot x, i)$ lands in this preimage.
\end{proof}
\subsection{Building two useful continuous functions}
The next two results, Propositions \ref{prop:build_r} and \ref{prop:build_alpha} have the same style, analogous proofs, and will be used in \S\ref{sec:main_proof} to construct a homotopy inverse to $\zeta$.
\begin{prop} \label{prop:build_r}
There exists a continuous function $r \colon \PConf^n(M) \to \mathbb{R}_{>0}$ so that
\begin{itemize}
\item $r(p_1, \ldots, p_n) \cdot f(p_i)(x) \in U$ for all $(p_1, \ldots, p_n) \in \PConf^n(M)$ and $(x, i) \in \mathbb{D}^l_n$, and moreover,
\item $(x, i) \mapsto \exp(r(p_1, \ldots, p_n) \cdot f(p_i)(x))$ is always an element of $\Emb^f(\mathbb{D}^l_n, M)$.
\end{itemize}
\end{prop}
\begin{proof}
There are three stages of the proof.  First, we consider a single configuration $(p_1, \ldots, p_n) \in \PConf^n(M)$ and produce a value $r_0 >0$ so that any continuous function $r$ with $r(p_1, \ldots, p_n) \leq r_0$ satisfies both conditions at the configuration $(p_1, \ldots, p_n)$.  We call $r_0$ the ``suggested value'' at this configuration.  The second step shows that this same $r_0$ also works for nearby configurations in an open neighborhood of $(p_1, \ldots, p_n)$.  The last step uses paracompactness of $\PConf^n(M)$ and a partition of unity subordinate to this system of open neighborhoods to stitch a single continuous function.

\textbf{Constructing a suggested value for a single configuration.}  Let $(p_1, \ldots, p_n) \in \PConf^n(M)$, and recall that $U \subseteq TM$ denotes the open domain of the exponential function.  Since the function $(s, x, i) \mapsto s \cdot f(p_i)(x)$ is continuous, the preimage of $U$ is open, and,  containing the slice $\{0\} \times \mathbb{D}^l_n$, it also contains some product $[0, s_0) \times \mathbb{D}^l_n$ with $s_0 > 0$ by the tube lemma.  So the first condition is satisfied as long as we pick $r_0 \leq s_0$.

Write $\psi \colon \mathbb{D}^l_n \to M$ for the smooth immersion $\psi(x, i) = \exp(s_0 \cdot f(p_i)(x))$.  Since $\psi(0,i) = p_i$ and $i \neq j \implies p_i \neq p_j$, and since $D\psi(0, i)$ is a positive scalar multiple of $f(p_i)$ so that every convex combination $t \cdot D\psi(0,i) + (1-t)f(p_i)$ has full rank, Propositions \ref{prop:get_alpha} and \ref{prop:tiny_open} provide a number $\alpha \in (0,1]$ so that $(x,i) \mapsto \psi(\alpha \cdot x, i)$ is a tiny embedding that is adapted to $f$.  Since $f(p_i)$ is a linear map, $\exp(s_0 \cdot f(p_i)(\alpha \cdot x)) = \exp(s_0 \cdot \alpha \cdot f(p_i)(x))$, and so we may take $r_0 = s_0 \cdot \alpha$.

\textbf{The same $r_0$ works in an open neighborhood of its configuration.}  The function
$$
\begin{array}{ccc}
\PConf^n(M) \times \mathbb{D}^l_n & \longrightarrow & TM \\
(q_1, \ldots, q_n; \, x, i) & \longmapsto & r_0 \cdot f(q_i)(x).
\end{array}
$$
is continuous, and so the preimage of $U$ is open.  Since the slice $(p_1, \ldots, p_n) \times \mathbb{D}^l_n$ is contained in the preimage, the tube lemma supplies some open subset $V \subseteq \PConf^n(M)$ with $(p_1, \ldots, p_n) \in U$ so that
$$
(q_1, \ldots, q_n; \, x, i) \in V \times \mathbb{D}^l_n \;\;\;\; \implies \;\;\;\; r_0 \cdot f(q_i)(x) \in U.
$$
On this neighborhood, the formula $\exp(r_0 \cdot f(q_i)(x))$ makes sense, giving a continuous function
$$
\begin{array}{ccc}
V & \longrightarrow & C^\infty(\mathbb{D}^l_n, M) \\
(q_1, \ldots, q_n) & \longmapsto & ((x, i) \mapsto \exp(r_0 \cdot f(q_i)(x))).
\end{array}
$$
By Proposition \ref{prop:adapted_open}, the subset $\Emb^f(\mathbb{D}^l_n, M)$ is open, and so its preimage, which we call $V_0$, is open as well.  By construction, the same $r_0$ that works for $(p_1, \ldots, p_n)$ also works for all elements of $V_0$.

\textbf{Building a continuous function from the suggested values.}
For each configuration $(p_1, \ldots, p_n) \in \PConf^n(M)$, build the open set $V_0 = V_0(p_1, \ldots, p_n) \subseteq \PConf^n(M)$ as above, and its corresponding suggested value $r_0 = r_0(p_1, \ldots, p_n)$.  Since $(p_1, \ldots, p_n) \in V_0(p_1, \ldots, p_n)$, these sets form an open cover.  Using the paracompactness of $\PConf^n(M)$, choose a partition of unity subordinate to this cover.  Since the set of valid choices for $r_0$ at a configuration $(p_1, \ldots, p_n)$ is convex, we construct $r$ as the weighted average of its suggested values, where the partition of unity supplies the weights.
\end{proof}
\begin{prop} \label{prop:build_alpha}
There exists a continuous function $\alpha \colon \Emb^f(\mathbb{D}^l_n, M) \to \mathbb{R}_{>0}$ so that, for all $\varphi \in \Emb^f(\mathbb{D}^l_n, M)$, and setting $a = \alpha(\varphi)$, $(p_1, \ldots, p_n) = \zeta(\varphi)$, and $\psi = (\theta \circ \zeta)(\varphi)$, 
\begin{itemize}
\item $(p_i, \varphi(ax, i)) \in L$ and $(p_i, \psi(ax, i)) \in L$ for all $(x, i) \in \mathbb{D}^l_n$, and consequently
$$
t \cdot \log_{p_i} \varphi(ax, i) + (1-t) \cdot \log_{p_i} \psi(ax, i) \in U \mbox {  for all $t \in [0,1]$ by \eqref{equation:convex}, and }
$$
\item the smooth map $(t, x, i) \mapsto \Big(t,\; \exp\big( t \cdot \log_{p_i} \varphi(ax, i) + (1-t) \cdot \log_{p_i} \psi(ax, i) \big)\Big)$ is an element of the space $\Emb([0,1] \times \mathbb{D}^l_n, \, [0,1] \times M)$.
\end{itemize}
\end{prop}
\begin{proof}
There are three steps.  First, we consider a single $\varphi$ and produce a value $\alpha_0 >0$ satisfying both conditions at $\varphi$.  As before, we call $\alpha_0$ the ``suggested value'' for $\alpha$ at $\varphi$.  Both conditions will hold at $\varphi$ if our actual choice $\alpha(\varphi)$ has $\alpha(\varphi) \leq \alpha_0$.  The second step shows that this same $\alpha_0$ works in an open neighborhood of $\varphi$, and not just for $\varphi$ itself.  Finally, covering $\Emb^f(\mathbb{D}^l_n, M)$ by such neighborhoods, we use a partition-of-unity argument to build $\alpha$ as a weighted average of its suggested values.

\textbf{Constructing a suggested value for a single embedding.}  The continuous function
$$
(x, i, \beta) \mapsto (\varphi(0,i), \varphi(\beta \cdot x);\, \psi(0,i), \psi(\beta \cdot x))
$$
sends the slice $\mathbb{D}^l_n \times \{0\}$ into the open set $L \times L$, and hence, by the tube lemma, there exists $\beta_0 >0$ so that the first condition is satisfied as long as we choose $\alpha_0 \leq \beta_0$.

For each $t \in [0,1]$, define $\gamma_t \in C^\infty(\mathbb{D}^l_n, M)$ by the formula
$$
\gamma_t(x, i) = \exp_{p_i}\big( t \cdot \log_{p_i} \varphi(\beta_0x, i) + (1-t) \cdot \log_{p_i} \psi(\beta_0  x, i) \big).
$$
Since, for any $p \in M$, we have $D\exp_p(0) = 1_{T_pM}$ and similarly, $D\log_p(p) = 1_{T_pM}$,
\begin{align*}
D\gamma_t(0, i) &= t \cdot D\varphi(0, i) + (1-t) \cdot D \psi(0, i) \\
&= t \cdot D\varphi(0, i) + (1-t) f(\varphi(0, i)),
\end{align*}
and hence $D\gamma_t(0, i)$ has full rank because $\varphi$ is adapted to $f$.  It follows that the graph of $t \mapsto \gamma_t$,
$$
\begin{array}{ccc}
\Gamma \colon [0,1] \times \mathbb{D}^l_n & \longrightarrow & [0,1] \times M \\
(t, x, i) & \longmapsto & (t, \gamma_t(x, i)),
\end{array}
$$
also has full-rank derivative along the compact subset $K = [0,1] \times \{0\} \times [n]$.  Moreover, $\Gamma_t$ restricts to an injection along the subset because $\gamma_t(0,i) = p_i = \varphi(0,i)$ and $\varphi$ is injective.  Applying Lemma \ref{lem:embedding_neighborhood}, there exists a small open neighborhood of the subset where $\Gamma$ is an embedding.  By the tube lemma, this neighborhood contains all small-enough values of $x \in \mathbb{D}^l$, and so we may pick $\alpha_0$ accordingly.  The map in the second condition is then an embedding because it factors through this restriction of $\Gamma$.

\textbf{The same $\alpha_0$ works in an open neighborhood of $\varphi$.}
Writing $\psi_{\eta} = (\theta \circ \zeta)(\eta)$, define a continuous map
$$
\begin{array}{ccc}
\Emb^f(\mathbb{D}^l_n, M) \times \mathbb{D}^l_n & \longrightarrow & M \times M \times M \times M \\
(\eta, x, i) & \longmapsto & (\eta(0,i), \eta(\alpha_0 \cdot x);\, \psi_{\eta}(0,i), \psi_{\eta}(\alpha_0 \cdot x)).
\end{array}
$$
Since $\psi_{\varphi} = \psi$, the slice $\{\varphi \} \times \mathbb{D}^l_n$ is contained in the preimage of the open set $L \times L$, and so we use the tube lemma to find an open set $V \subseteq \Emb^f(\mathbb{D}^l_n, M)$ containing $\varphi$ for which the expression
$$
t \cdot \log_{p_i} \eta(\alpha_0x, i) + (1-t) \cdot \log_{p_i} \psi_\eta(\alpha_0x, i)
$$
makes sense, and hence lives in $U$ by \eqref{equation:convex}.  Define the continuous map
$$
\begin{array}{ccc}
V \times \mathbb{D}^l_n & \longrightarrow & C^{\infty}([0,1] \times \mathbb{D}^l_n, [0,1] \times M) \\
(\eta, x, i) & \longmapsto & \Bigg( (t, x, i) \mapsto \Big(t,\; \exp\big( t \cdot \log_{p_i} \eta(\alpha_0x, i) + (1-t) \cdot \log_{p_i} \psi_\eta(\alpha_0x, i) \big)\Big) \Bigg).
\end{array}
$$
The slice $\{\varphi\} \times \mathbb{D}^l_n$ is contained in the preimage of the set $\Emb^f(\mathbb{D}^l_n, M)$, which is open by Proposition~\ref{prop:adapted_open}.  The tube lemma then supplies the required open neighborhood.

\textbf{Building a continuous function from the suggested values.}  The space of smooth maps $C^{\infty}(\mathbb{D}^l_n, M)$ is paracompact since it is metrizable \cite[p. 62]{Hirsch}, and so the same is true of the subspace $\Emb^f(\mathbb{D}^l_n, M)$.  If some value $a > 0$ satisfies both conditions, then the same is true for any $a'$ in the convex set $(0, a]$.  Consequently, a partition of unity subordinate to the open cover $\{V_{\varphi}\}$ found in the previous step provides a continuous choice of convex combination, and hence a global choice of $\alpha$.
\end{proof}

\subsection{A homotopy inverse to $\zeta$} \label{sec:main_proof}
Use Propositions \ref{prop:build_r} and \ref{prop:build_alpha} to build continuous functions $r$ and $\alpha$, and define $\theta \colon \PConf^n(M) \to \Emb^f(\mathbb{D}^l, M)$ by the formula
$$
\theta(p_1, \ldots, p_n)(x, i) = \exp(r(p_1, \ldots, p_n) \cdot f(p_i)(x)).
$$

\begin{thm} \label{thm:equiv}
The function $\theta$ is homotopy inverse to $\zeta \colon \Emb^f(\mathbb{D}^l_n, M) \to \PConf^n(M)$.
\end{thm}
\begin{proof}
Since $\zeta \circ \theta = 1$, it is enough to build a homotopy from $\theta \circ \zeta$ to the identity.  We build this homotopy as a composite of three smaller homotopies, with intermediate stops at
$$
\begin{array}{ccc}
\varphi \mapsto ((x, i) \mapsto \psi(\alpha x, i)) & \mbox{ and } &  \varphi \mapsto ((x, i) \mapsto \varphi(\alpha x, i)),
\end{array} 
$$
recalling that the variables $\psi = (\theta \circ \zeta)(\varphi)$ and $\alpha = \alpha(\varphi)$ implicitly depend on $\varphi$.  The outer two homotopies come from any path connecting the function $\alpha$ to the constant function $1$, perhaps $t \mapsto t \cdot \alpha + (1-t) \cdot 1$.  The middle homotopy uses that the smooth map
$$
(t, x, i) \mapsto \Big(t,\; \exp\big( t \cdot \log_{p_i} \varphi(\alpha x, i) + (1-t) \cdot \log_{p_i} \psi(\alpha x, i) \big)\Big)
$$
is known to be an embedding by the construction of $\alpha$ in Proposition \ref{prop:build_alpha}.  In particular, for each $t \in [0,1]$, the map
$$
(x, i) \mapsto \exp\big( t \cdot \log_{p_i} \varphi(\alpha x, i) + (1-t) \cdot \log_{p_i} \psi(\alpha x, i) \big)
$$
is an embedding.  Since $\exp$ and $\log$ are inverse, it is immediate that this formula provides a homotopy-through-embeddings connecting the two intermediate stops.
\end{proof}
\section{Proof of Theorem \ref{thm:main}} \label{sect:mainproof}
Let $M$ be a Riemannian manifold equipped with a smoothly-varying $l$-frame $f(p)$ for $p \in M$, as in \S\ref{sec:adapted}.  Theorem \ref{thm:equiv} provides a homotopy equivalence 
$$\Emb^f(\mathbb{D}^l_n, M) \simeq \PConf^n(M),$$
so $\Emb^f(\mathbb{D}^l_n, M)$, which was introduced in Definition \ref{defn:adapted}, is a reasonable model for configuration space.  The category $\mathcal{F}E_l^{op}$ acts on $\Emb^f(\mathbb{D}^l_n, M)$ by precomposition, as in Proposition \ref{prop:action}.  We illustrate precomposition with an embedding of four disjoint disks into three disjoint disks in the case $M = S^1 \times S^1$:\\
\begin{center}
\scalebox{.65}{\includegraphics{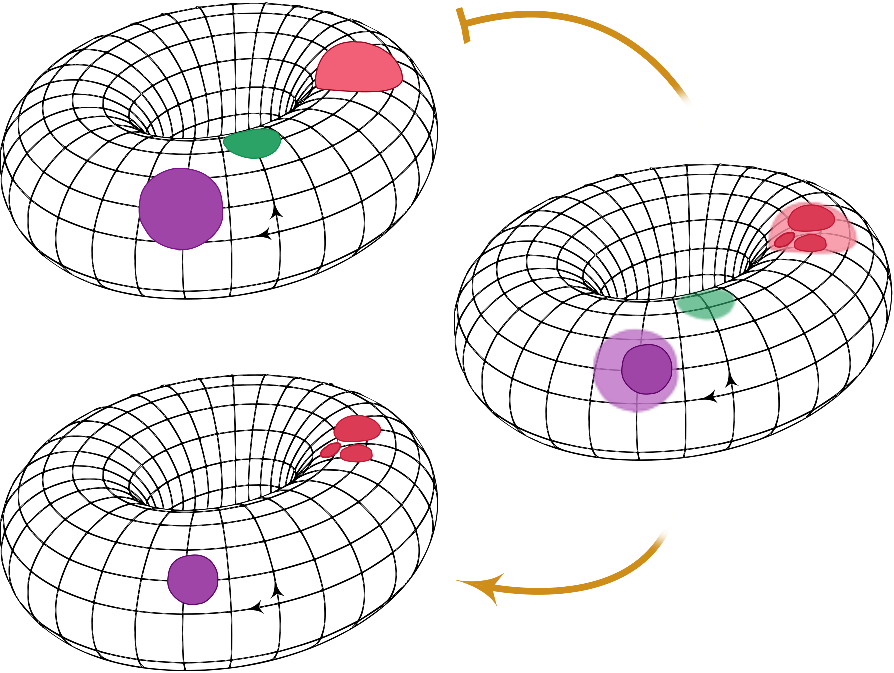}}\\
\end{center}

It follows that $[n] \mapsto H^i(\Emb^f(\mathbb{D}^l_n, M)) \simeq H^i(\PConf^n(M))$ is an $\mathcal{F}E_l$-representation.  Moreover, since homotopic maps induce the same linear map on cohomology, this action descends to $\pi_0 \mathcal{F}E_l$, the quotient category induced by taking path components of every hom-space. We have mostly proved the following theorem, which is stronger than Theorem~\ref{thm:main}.
\begin{thm}\label{thm:refinedmain}  Let $M$ be a smooth manifold admitting $l$ everywhere linearly independent vector fields, and let $k$ be a coefficient field.  The assignment $[n] \mapsto H^i(\PConf^n(M); k)$ is a representation of $\pi_0 \mathcal{F}E_l$, a category that simplifies further depending on $l$:
$$
\pi_0 \mathcal{F}E_l = \left\{
     \begin{array}{lr}
       \mathcal{FI} & : l = 0 \phantom{.}\\
       \ncFin & : l = 1 \phantom{.}\\
       \Fin & : l \geq 2.
     \end{array}
   \right.
$$
Further, if $M$ is finite type, connected, orientable, and dimension at least two, then this representation is finitely generated.
\end{thm}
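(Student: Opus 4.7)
The proof splits into two independent tasks: (1) identifying $\pi_0 \mathcal{F}E_l$ with the stated category in each range of $l$, and (2) establishing finite generation under the geometric hypotheses on $M$.

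For (1), the plan is to unwind the exp-embedding data. An exp-embedding $B^l_{\sqcup n} \to B^l_{\sqcup n'}$ records an underlying set map $f : [n] \to [n']$ (which target component each source ball enters) and, inside each target ball, a configuration of disjoint scaled sub-balls, each specified by a center in $B(\R^l, 1)$ and a positive scale $\varepsilon$. Taking path components in each case: when $l = 0$, balls collapse to points and embeddings are forced to be injections, yielding $\mathcal{FI}$; when $l = 1$, the space of configurations of disjoint scaled sub-intervals inside $(-1, 1)$ deformation retracts onto the discrete set of their left-to-right orderings, so the path component of an exp-embedding records exactly a set map together with a linear order on each fiber, matching $\ncFin$; when $l \geq 2$, $\Conf^k B(\R^l, 1)$ is path-connected, so $\pi_0$ remembers only the set map, giving $\Fin$. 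A geometric check is required in the $l=1$ case to confirm that the composition of nested scaled sub-intervals induces exactly the lex-style ordering on fibers prescribed for $\ncFin$ in the introduction.

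For (2), the plan when $l \geq 1$ is to restrict the $\pi_0 \mathcal{F}E_l$-representation to a $\Delta$-representation and apply the Dold--Kan representation Lemma~\ref{lem:dkrl}. Restriction along $\psi : \Delta \to \ncFin$ (post-composed with the forgetful $\ncFin \to \Fin$ in the case $l \geq 2$) turns $n \mapsto H^i(\PConf^n M, k)$ into a $\Delta$-module with the same dimension sequence. By \cite[Theorem 6.3.1 and Remark 6.3.3]{CEF}, extended to arbitrary characteristic exactly as in the proof of Corollary~\ref{cor:dims}, this sequence is bounded by a polynomial in $n$ of degree $i$ (or $2i$ if $\dim M = 2$). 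Lemma~\ref{lem:dkrl} then produces a finite generating set for the $\Delta$-module, and since every $\Delta$-orbit of a vector sits inside its $\pi_0 \mathcal{F}E_l$-orbit via $\psi$, the same vectors generate the original $\pi_0 \mathcal{F}E_l$-representation. The case $l = 0$ is not covered by this route (there is no natural functor $\Delta \to \mathcal{FI}$), so I would invoke directly the finite generation of the $\mathcal{FI}$-module $n \mapsto H^i(\PConf^n M, k)$ from \cite[Theorem 1.10]{CEFN}, whose hypotheses coincide with ours.

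The main obstacle is expected to be the $l = 1$ piece of (1): systematically translating composition of physically nested scaled sub-intervals into the abstract lex-ordering rule of $\ncFin$, and handling the interaction between the scale parameter and the induced orderings under composition. Part (2), by contrast, is essentially formal once one has the CEF polynomial bound in hand, as it simply leverages the cosimplicial (i.e. $\Delta$-) structure that the paper has already isolated.
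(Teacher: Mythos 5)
Your proposal is correct. On part (1), the identification of $\pi_0\mathcal{F}E_l$, you make the same three observations the paper does ($0$-balls are points, intervals in an interval are ordered by left-to-right position, $\Conf^k B(\R^l,1)$ is connected for $l\ge 2$), and you are right that the only substantive work is checking that composition of nested sub-intervals reproduces the lex-ordering rule defining composition in $\ncFin$; the paper treats this as an ``easy observation,'' but your caution is warranted. On part (2) the two arguments genuinely diverge. The paper handles all $l$ at once: it cites finite generation of $n\mapsto H^i(\PConf^n M,k)$ as an $\FI$-module (from the Totaro spectral sequence argument of \cite{CEF}), and then notes that $\pi_0\mathcal{F}E_0 = \FI$ is a subcategory of $\ncFin$ and of $\Fin$, so a set of $\FI$-generators a fortiori generates the bigger-category module. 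Your route instead uses the subcategory $\Delta \hookrightarrow \ncFin$ (via $\psi$) for $l\ge 1$: polynomial dimension bounds plus Lemma~\ref{lem:dkrl} give finite generation over $\Delta$, which transfers upward exactly as you say. You correctly notice that this route cannot handle $l=0$ (no $\Delta\to\FI$), and you patch it by citing \cite[Theorem 1.10]{CEFN}, which is fine. The trade-off: the paper's $\FI$-subcategory argument is one step and uniform in $l$, while your $\Delta$/Dold--Kan argument is more hands-on and reveals that finite generation over $\ncFin$ (for $l\ge 1$) needs only the \emph{dimension} bound, not the full structural theorem on $\FI$-modules; but you pay for it with the case split at $l=0$ and a slightly longer chain of reductions. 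Both are sound.
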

\begin{proof}
The construction explained above gives the action of $\pi_0 \mathcal{F}E_l$; we must show that these representations are finitely generated.  The paper \cite{CEF} deduces finite generation even for the case $l=0$ from the spectral sequence of Totaro \cite{Totaro}, from which it follows for every other $l$ (since $\pi_0\mathcal{F}E_0$ is naturally a subcategory of every other $\pi_0\mathcal{F}E_l$.)

The simplification of the category $\pi_0 \mathcal{F}E_l$ depends on the following easy observations.  When $l=0$, embeddings of $0$-balls are just injections among disjoint unions of points; it follows that $\pi_0\mathcal{F}E_0 = \mathcal{FI}$, the category of finite sets with injections.  For $l=1$, a configuration of line segments in a line segment (up to homotopy) is the same as an ordering of the segments; this provides the isomorphism $\pi_0\mathcal{F}E_1 \simeq \mathcal{N}$.  When $l \geq 2$, $\mathcal{F}E_l([n], [1])$ is a connected space and so $\pi_0 \mathcal{F}E_l = \mathcal{F}$.
\end{proof}

\begin{rem}
The notation $\mathcal{F}E_l$ is meant to evoke the ``category of operations'' for the little $l$-balls operad $E_l$, introduced by May-Thomason in \cite{maythomason}.  (Another popular notation for this category is $E_l^{\otimes}$.)  Our category is essentially an unbased analog of their category.
\end{rem}

\begin{rem}\label{rem:cosimplicialdimensions}
A version of this construction for $l=1$ is essentially present in work of Berrick, Cohen, Wong, and Wu \cite{BCWW}, \cite{WuSimplicial}, although they are content to use the subcategory $\Delta \subseteq \mathcal{N}$.  It must be noted that Corollary~\ref{co:polydim}, the statement on dimensions of cohomology of configuration spaces of manifolds with flows, could be deduced immediately from \cite{BCWW} and the Dold-Kan representation Lemma \ref{lem:dkrl}.  Nevertheless, such a proof seems not to be present in the literature, despite considerable interest in the asymptotic behavior of these dimension sequences; and the use of $\mathcal{N}$ as opposed to $\Delta$ is necessary if one wishes to obtain stability theorems for the character of the symmetric group actions, as in Corollary~\ref{co:polychar}.
\end{rem}
\bibliographystyle{alphaurl}
\bibliography{references}
\end{document}